\newtheorem{theorem}{Theorem}[section]
\newtheorem{proposition}[theorem]{Proposition}
\newtheorem{lemma}[theorem]{Lemma}
\theoremstyle{definition}
\newtheorem{definition}[theorem]{Definition}
\theoremstyle{remark}
\newtheorem{remark}[theorem]{Remark}
\numberwithin{equation}{section}
\def\R{{\mathbb{R}}}
\def\vect{\overrightarrow}
\def\H{{\vec{H}}}
\def\epsilon{{\varepsilon}}
\def\phi{{\varphi}}
\def\theta{{\vartheta}}
\DeclareMathOperator{\tr}{tr}
\def\ol#1{{\overline{#1}}}
\begin{document}
\title{Geometric invariants and principal configurations on spacelike surfaces immersed in ${\mathbb R}^{3,1}$}
\author{Pierre Bayard \& Federico S\'anchez-Bringas}
\address{Pierre Bayard, Instituto de
F\'{\i}sica y Matem\'aticas, U.M.S.N.H. Ciudad Universitaria, CP.
58040 Morelia, Michoac\'an, Mexico}
\email{bayard@ifm.umich.mx}
\address{Federico S\'anchez-Bringas, Facultad de Ciencias, UNAM, Mexico}
\email{sanchez@servidor.unam.mx}
\begin{abstract}
We first describe the numerical invariants attached to the second fundamental form of a spacelike surface in four-dimensional Min\-kowski space. We then study the configuration of the $\nu$-principal curvature lines on a  spacelike surface, when the normal field $\nu$ is lightlike (\textit{the lightcone configuration}). Some observations on the mean directionally curved lines and on the asymptotic lines on spacelike surfaces end the paper. 
\end{abstract}
\maketitle
\thispagestyle{empty}
\markboth{\textsl{P. Bayard, F. S\'anchez-Bringas}}
{\textsl{Invariants and principal configurations on spacelike surfaces in $\R^{3,1}$}}
\section*{Introduction} 
The Minkowski space $\R^{3,1}$ is the affine space $\R^4$ endowed with the metric $g=dx_1^2+dx_2^2+dx_3^2-dx_4^2.$ We say that a surface $M$ in $\R^{3,1}$ is spacelike if $g$ induces on $M$ a Riemannian metric. Thus, at each point $p$ of a spacelike surface $M$, the space $\R^{3,1}$ splits in
$$\R^{3,1}=T_pM\oplus N_pM,$$
where the tangent plane $T_pM$ and the normal plane $N_pM$ at $p$ are respectively equipped with a metric of signature $(2,0)$ and $(1,1).$ Denoting by $\ol D$ the usual covariant derivative on $\R^{3,1},$ the second fundamental form of $M$ at $p\in M$ is the bilinear map
$$II_p:T_pM\times T_pM\rightarrow N_pM,\ (X,Y)\mapsto (\ol D_X\ol Y)^{\perp}$$
which maps two tangent vectors $X,Y$ at $p$ to the normal component of the covariant derivative in the direction of $X$ of a local field $\ol Y$ of $M$ which extends $Y.$

In the first section of this article, we describe the numerical invariants of a quadratic map $\R^2\rightarrow\R^{1,1}$ modulo the natural action of the groups of isometries,  and classify the equivalence classes in terms of the invariants. As a byproduct of this systematic algebraic study, we obtain all the invariants of the second fundamental form of a spacelike surface: additionally to the lorentzian analogs of the four invariants defined on surfaces in four-dimensional euclidean space (see e.g. \cite{Little}), a new invariant appears in some exceptional cases, and completes the set of invariants (Theorem \ref{theorem algebraic classification}). In the second section we study the geometric properties of the curvature ellipse associated to a quadratic map: we give an intrinsic  equation of the curvature ellipse which naturally leads to the description of the ellipse in terms of the invariants (Theorem \ref{theorem ellipse classification}).

After these algebraic preliminaries, we study special lines on a spacelike surface. 
We first write down the normal form of the differential equation of the so-called lightcone configuration at an isolated umbilic point (Section \ref{normal form}), and then obtain from the Poincar\'e-Hopf Theorem a lower bound of the number of lightlike umbilic points on a generic closed spacelike surface (Theorems \ref{number lightlike umbilic points} and \ref{existence lightlike umbilic points}).
In the last section of the article we study the mean directionally curved lines and the asymptotic lines.

\section{Quadratic maps from $\R^2$ to $\R^{1,1}$}\label{section def invariants}
In this section, we study the vector space $Q(\R^2,\R^{1,1})$ of quadratic maps from $\R^2$ to $\R^{1,1}.$ We suppose that $\R^2$ and $\R^{1,1}$ are canonically oriented and time-oriented: a vector of $\R^{1,1}$ will be called future-directed if its second component in the canonical basis is positive. We consider the reduced (connected) groups of euclidean and lorentzian direct isometries of $\R^2$ and $\R^{1,1},$ $SO_2(\R)$ and $SO_{1,1}(\R).$ They act on $Q(\R^2,\R^{1,1})$ by composition
\begin{eqnarray*}
SO_{1,1}(\R)\times Q(\R^2,\R^{1,1})\times SO_2(\R)&\rightarrow &Q(\R^2,\R^{1,1})\\
(g_1,q,g_2)&\mapsto& g_1\circ q\circ g_2.
\end{eqnarray*}
We are interested in the description of the quotient set $$\displaystyle{SO_{1,1}\setminus Q(\R^2,\R^{1,1})/SO_2.}$$
We first introduce the algebraic invariants of a quadratic map and then give a description of the quotient set (Theorem \ref{theorem algebraic classification}). 

\subsection{Forms associated to a quadratic map}
We fix $q\in Q(\R^2,\R^{1,1}).$ If $\nu$ belongs to $\R^{1,1},$ we denote by $S_{\nu}$ the symmetric endomorphism of $\R^2$ associated to the real quadratic form $\langle q,\nu\rangle,$ and we define, for $\nu,\nu_1,\nu_2\in\R^{1,1},$
$$L_q(\nu):=\frac{1}{2}\tr (S_{\nu}),\ Q_q(\nu):=\det(S_{\nu})\mbox{ and } A_q(\nu_1,\nu_2):=\frac{1}{2}\left[S_{\nu_1},S_{\nu_2}\right].$$
Here $\left[S_{\nu_1},S_{\nu_2}\right]$ denotes the morphism $S_{\nu_1}\circ S_{\nu_2}-S_{\nu_2}\circ S_{\nu_1};$ it is skew-symmetric on $\R^2,$ and thus identifies with the real number $\alpha$ such that its matrix in the canonical basis of $\R^2$ is
$$\left[S_{\nu_1},S_{\nu_2}\right]=\left(\begin{array}{cc}0&-\alpha\\\alpha&0\end{array}\right).$$
In the sequel, we will implicitly make this identification. Thus, $L_q$ is a linear form, $Q_q$ is a quadratic form, and $A_q$ is a bilinear skew-symmetric form on $\R^{1,1}.$ These forms are linked together by the following lemma:
\begin{lemma}\label{Lemma identity Phi,A}
The quadratic form
$$\Phi_q:=L_q^2-Q_q$$
is non-negative and the following identity holds: for all $\nu_1,\nu_2\in\R^{1,1},$
\begin{equation}\label{identity Phi,A}
\Phi_q(\nu_1)\Phi_q(\nu_2)=\tilde\Phi_q(\nu_1,\nu_2)^2+A_q(\nu_1,\nu_2)^2,
\end{equation}
where $\tilde\Phi$ denotes the polar form of $\Phi.$
\end{lemma}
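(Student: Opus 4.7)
The plan is to reduce the lemma to an explicit coordinate computation with $2\times 2$ symmetric matrices. Since $\nu\mapsto\langle q,\nu\rangle$ is linear from $\R^{1,1}$ to the space of real quadratic forms on $\R^2$, the assignment $\nu\mapsto S_\nu$ is $\R$-linear, so $L_q$ is genuinely linear, $Q_q$ genuinely quadratic, and $A_q$ genuinely bilinear in $(\nu_1,\nu_2)$; in particular the polar form of $\Phi_q$ can be computed from the identity $S_{\nu_1+\nu_2}=S_{\nu_1}+S_{\nu_2}$.

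Concretely, I would write $S_{\nu_j}=\bigl(\begin{smallmatrix}a_j&b_j\\ b_j&c_j\end{smallmatrix}\bigr)$ and introduce the trace-free coordinates $u_j:=(a_j-c_j)/2$ and $v_j:=b_j$. A direct expansion gives
$L_q(\nu_j)^2-Q_q(\nu_j)=\bigl((a_j+c_j)/2\bigr)^2-(a_jc_j-b_j^2)=u_j^2+v_j^2$, which is manifestly $\ge 0$ and settles the first claim. Polarizing this identity in $\nu$ yields $\tilde\Phi_q(\nu_1,\nu_2)=u_1u_2+v_1v_2$. Finally, computing the commutator directly shows that $S_{\nu_1}S_{\nu_2}-S_{\nu_2}S_{\nu_1}$ has upper-right entry $(a_1-c_1)b_2-(a_2-c_2)b_1$; combining with the sign convention $[S_{\nu_1},S_{\nu_2}]=\bigl(\begin{smallmatrix}0&-\alpha\\ \alpha&0\end{smallmatrix}\bigr)$ and the factor $\tfrac12$ in the definition of $A_q$ gives $A_q(\nu_1,\nu_2)=u_2v_1-u_1v_2$.

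With these three expressions in hand, identity (\ref{identity Phi,A}) reduces to the classical Lagrange identity $(u_1^2+v_1^2)(u_2^2+v_2^2)=(u_1u_2+v_1v_2)^2+(u_1v_2-u_2v_1)^2$ in $\R^2$, verified by direct expansion. The proof is therefore essentially bookkeeping; the only points requiring attention are the various factors $\tfrac12$ in the definitions of $L_q$, $A_q$ and in polarization, and the sign convention used to identify a skew-symmetric $2\times 2$ matrix with a scalar. Conceptually, the map $\nu\mapsto(u(\nu),v(\nu))$ sends $\R^{1,1}$ linearly into a two-dimensional Euclidean plane on which $\Phi_q$, $\tilde\Phi_q$, $A_q$ become, respectively, the squared norm, the inner product, and the oriented area form, and the lemma is then nothing but the Euclidean identity $|z_1|^2|z_2|^2=\langle z_1,z_2\rangle^2+(z_1\wedge z_2)^2$.
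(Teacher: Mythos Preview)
Your argument is correct and coincides with the paper's approach. The paper phrases the same computation intrinsically by naming the two-dimensional Euclidean plane you identify at the end: it sets $F$ to be the space of traceless symmetric $2\times 2$ matrices with scalar product $\langle S,T\rangle_F=\tfrac{1}{2}\tr(ST)$ and orthonormal basis $E_1=\bigl(\begin{smallmatrix}1&0\\0&-1\end{smallmatrix}\bigr)$, $E_2=\bigl(\begin{smallmatrix}0&1\\1&0\end{smallmatrix}\bigr)$, so that your coordinates $(u_j,v_j)$ are precisely the components of $S_{\nu_j}^o$ in this basis; the identifications $\Phi(\nu)=\|S_\nu^o\|_F^2$, $\tilde\Phi(\nu_1,\nu_2)=\langle S_{\nu_1}^o,S_{\nu_2}^o\rangle_F$, $A(\nu_1,\nu_2)=[S_{\nu_1}^o,S_{\nu_2}^o]_F$ and the final appeal to the Lagrange identity are then exactly your steps.
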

\begin{proof}
Let $F$ be the vector space of traceless symmetric operators on $\R^2.$ We first equip $F$ with a structure of oriented euclidean space. We define the scalar product $\langle.,.\rangle_F$ on $F$ as follows: if the operators are identified with their matrix in the canonical basis of $\R^2,$ we set, for all $S,T\in\ F,$ 
\begin{equation}\label{scalar product}
\langle S,T\rangle_F:=\frac{1}{2}\tr(ST).
\end{equation}
We choose the orientation on $F$ such that the orthonormal basis
$$E_1=\left(\begin{array}{cc} 1& 0\\0&-1\end{array}\right),\ E_2=\left(\begin{array}{cc} 0 & 1\\ 1&0\end{array}\right)$$
be positively oriented.  By a direct computation the mixed product $[.,.]_F$ on $F$ is given by: for all $S,T\in F,$ 
\begin{equation}\label{mixed product}
[S,T]_F=\frac{1}{2}[S,T].
\end{equation}
We recall that the mixed product $[.,.]_F$ is defined as the determinant in a positively oriented basis of $F,$ and that the bracket $[S,T]$ (without index) stands for the commutator $ST-TS.$

We first note the following interpretation of $\Phi:$ let $\nu\in\R^{1,1},$ and denote by $\lambda_1,\lambda_2$ the eigenvalues of $S_{\nu}.$ If $S_{\nu}^o$ denotes the traceless part of the symmetric operator $S_{\nu}$ then
$$S_{\nu}^o=S_{\nu}-\frac{1}{2}\tr S_{\nu}.\mbox{Id},$$
and the eigenvalues of $S_{\nu}^o$ are $\displaystyle{\pm\frac{\lambda_1-\lambda_2}{2}.}$ Thus, in view of (\ref{scalar product}),
$$\|S_{\nu}^o\|_F^2=\frac{1}{4}(\lambda_1-\lambda_2)^2.$$
By the very definition of $\Phi$ this last quantity is $\Phi(\nu),$ and we thus get
\begin{equation}\label{interpretation Phi}
\Phi(\nu)=\|S_{\nu}^o\|_F^2\mbox{ and }\tilde\Phi(\nu_1,\nu_2)=\langle S_{\nu_1}^o,S_{\nu_2}^o\rangle_F.
\end{equation}
We now give an interpretation of the form $A:$  observing that 
$$\left[S_{\nu_1},S_{\nu_2}\right]=\left[S_{\nu_1}^o,S_{\nu_2}^o\right],$$
 we get from the definition of $A$ and (\ref{mixed product}) that
\begin{equation}\label{interpretation A}
A(\nu_1,\nu_2)=\left[S_{\nu_1}^o,S_{\nu_2}^o\right]_F.
\end{equation}
Identity (\ref{identity Phi,A}) is then straightforward: the Lagrange identity in $F$ reads
\begin{equation}\label{Lagrange identity}
\|S\|_F^2\|T\|_F^2=\langle S,T\rangle_F^2+[S,T]_F^2.
\end{equation}
Applying (\ref{Lagrange identity}) to $S_{\nu_1}^o$ and $S_{\nu_2}^o$ and using (\ref{interpretation Phi}) and (\ref{interpretation A}) we get (\ref{identity Phi,A}).
\end{proof}
The forms $L_q,\ \Phi_q,\ A_q$ are invariant by the $SO_2$-action on $q:$ for all $g\in SO_2(\R),$
$$L_{q\circ g}=L_q,\ \Phi_{q\circ g}=\Phi_q\mbox{ and }A_{q\circ g}=A_q.$$
 Moreover, these forms contain all the information on $q$ modulo this action: setting
 $$P=\{(L,\Phi,A)\in {\R^{1,1}}^*\times S^2{\R^{1,1}}^*\times \Lambda^2{\R^{1,1}}^*:\ \Phi\mbox{ is non-negative, and }(\ref{identity Phi,A})\mbox{ holds}\},$$
where $S^2{\R^{1,1}}^*$ and $\Lambda^2{\R^{1,1}}^*$ stand for the sets of symmetric and skew-symmetric bilinear forms on $\R^{1,1},$ the following result holds:
\begin{lemma}
The map 
\begin{eqnarray*}
\Theta:Q(\R^2,\R^{1,1})/SO_2&\rightarrow& P\\
{[q]} &\mapsto & \left(L_{[q]},\ \Phi_{[q]},\ A_{[q]}\right)
\end{eqnarray*}
 is bijective.
\end{lemma}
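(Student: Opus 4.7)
The plan is to reduce both directions to the analysis of the single linear map $T_q:\R^{1,1}\to F$, $\nu\mapsto S_\nu^o$, into the oriented euclidean plane $F$ introduced in the proof of Lemma \ref{Lemma identity Phi,A}. That proof already shows that $L_q$ records $\tfrac12\tr S_\nu$ and that $(\Phi_q,A_q)$ is exactly the pullback via $T_q$ of the scalar product and the mixed product on $F$. Substituting $q\circ g$ for $q$ replaces $S_\nu$ by $g^{-1}S_\nu g$, so $T_{q\circ g}=c_g\circ T_q$, where $c_g\in SO(F)$ is conjugation by $g$. A direct matrix computation shows that $c:SO_2(\R)\to SO(F)$ is surjective (in fact the standard double cover), so the $SO_2$-orbits of $q$ correspond bijectively to the pairs $(L_q,[T_q])$, where $[T_q]$ is the class of $T_q$ modulo post-composition by $SO(F)$.

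For injectivity, suppose two quadratic maps $q_1,q_2$ share the same triple $(L,\Phi,A)$. The maps $T_{q_1},T_{q_2}:\R^{1,1}\to F$ then pull back the scalar product to the same $\tilde\Phi$ and the mixed product to the same $A$. I would argue case by case on the common rank $r$ of $T_{q_i}$ (which equals the rank of $\Phi$, since $\|T_{q_i}(\nu)\|_F^2=\Phi(\nu)$) that some $f\in SO(F)$ satisfies $f\circ T_{q_1}=T_{q_2}$: for $r=2$, $f:=T_{q_2}\circ T_{q_1}^{-1}$ is well-defined and orthogonal, with orientation fixed by the agreement of $A$; for $r=1$, identity (\ref{identity Phi,A}) forces $A=0$, both images are a common line, and each $T_i(\nu)$ is linearly determined by one distinguished $T_i(\nu_0)$ via $\tilde\Phi$, so a suitable rotation of $F$ matching the sign of this distinguished vector does the job; for $r=0$, take $f=\mathrm{Id}$. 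Lifting $f$ through $c$ yields $g\in SO_2(\R)$ with $q_2=q_1\circ g$.

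For surjectivity, given $(L,\Phi,A)\in P$, I would explicitly construct $T:\R^{1,1}\to F$ realizing $\tilde\Phi$ and $A$ as pullbacks, again by rank analysis: in rank 2, pick a $\Phi$-orthonormal basis $(e_1,e_2)$ and set $T(e_1):=E_1$, $T(e_2):=A(e_1,e_2)\,E_2$, using that (\ref{identity Phi,A}) forces $A(e_1,e_2)=\pm 1$; in rank 1, $A=0$ is forced and one takes $T(\nu_0):=\sqrt{\Phi(\nu_0)}\,E_1$ together with $T\equiv 0$ on $\ker\Phi$; in rank 0, simply $T=0$. Set $S_\nu:=L(\nu)\,\mathrm{Id}+T(\nu)$ and recover $q$ by duality on the non-degenerate space $\R^{1,1}$: $\langle q(v),\nu\rangle:=\langle S_\nu v,v\rangle$ for all $\nu$. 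Checking that this $q$ has the prescribed invariants is then routine, using $\tr T=0$, Cayley--Hamilton (giving $\det(L\,\mathrm{Id}+T)=L^2-\|T\|_F^2$), and (\ref{mixed product}). The main delicate step is the rank-1 case of injectivity, where the rotation of $F$ relating $T_1$ to $T_2$ is not uniquely determined by the 1-dimensional common image; one must check that a single rotation can match the two maps \emph{simultaneously} on all of $\R^{1,1}$, which is ensured by the fact that the ratio $T_i(\nu)/T_i(\nu_0)=\tilde\Phi(\nu_0,\nu)/\Phi(\nu_0)$ is independent of $i$.
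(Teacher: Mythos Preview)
Your argument is correct, and rests on the same identification already set up in Lemma~\ref{Lemma identity Phi,A}: writing $S_\nu=L(\nu)\,\mathrm{Id}+T_q(\nu)$ with $T_q(\nu)=S_\nu^o\in F$, the pair $(\Phi,A)$ is precisely the pullback by $T_q$ of the euclidean structure on $F$. Your observation that the $SO_2$-action on $q$ becomes post-composition of $T_q$ by the (surjective) double cover $c:SO_2\to SO(F)$ is the conceptual heart, and your rank-by-rank matching of $T_{q_1}$ with $T_{q_2}$ up to $SO(F)$ is sound; the delicate rank-$1$ point you flag is handled exactly as you say.

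The paper proceeds differently in packaging, though not in substance. Instead of a case analysis on $\mathrm{rank}\,T_q$, it fixes once and for all a reference vector $\nu_o$ with $\Phi(\nu_o)=1$ (when $\Phi\neq 0$) and a basis $(e_1,e_2)$ of $\R^2$ in which $S_{\nu_o}^o=E_1$, and then derives the single closed formula
\[
S_\nu \;=\; L(\nu)\,I \;+\; \tilde\Phi(\nu_o,\nu)\,E_1 \;+\; A(\nu_o,\nu)\,E_2,
\]
valid for \emph{all} $\nu$. This formula simultaneously gives injectivity (the right-hand side depends only on $(L,\Phi,A)$) and surjectivity (take it as a definition and check the invariants directly). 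In effect the paper's choice of $\nu_o$ and of $(e_1,e_2)$ is your choice of a lift through $c$, and the formula above is the coordinate expression of your map $T$; but the paper avoids the rank split by letting the formula degenerate gracefully when $\Phi=0$. Your version has the advantage of making the role of the $SO_2\twoheadrightarrow SO(F)$ double cover explicit, which clarifies \emph{why} the invariants $(L,\Phi,A)$ are complete; the paper's version is shorter and yields an explicit normal form in one stroke.
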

\begin{proof}
We first give a formula which permits to recover $[q]$ from its associated forms $L,\Phi,A$ (formula (\ref{S function Phi, A}) below). We keep the notation introduced in the proof of Lemma \ref{Lemma identity Phi,A} above.

If $\Phi$ is not the null form, we fix $\nu_o\in\R^{1,1}$ such that $\Phi(\nu_o)=1. $ The eigenvalues of $S_{\nu_o}$ are the roots of the polynomial 
$$X^2-2L(\nu_o)X+Q(\nu_o)$$
and are thus $L(\nu_o)\pm1.$ We choose a positively oriented orthonormal basis $(e_1,e_2)$ of $\R^2$ such that  in this basis 
$$S_{\nu_o}=L(\nu_o)I+E_1.$$ 
For all $\nu\in \R^{1,1},$ the matrix of $S_{\nu}$ may be a priori written in the form
$$S_{\nu}=L(\nu)I+a_{\nu}E_1+b_{\nu}E_2,$$
where $a_{\nu}$ and $b_{\nu}$ belong to $\R.$  We necessarily have
\begin{equation}
a_{\nu}=\langle S_{\nu_o}^o,S_{\nu}^o\rangle_F=\tilde\Phi(\nu_o,\nu),
\end{equation}
and
\begin{equation}
b_{\nu}=\left[S_{\nu_o}^o,S_{\nu}^o\right]_F=A(\nu_o,\nu).
\end{equation}
Thus, in $(e_1,e_2),$
\begin{equation}\label{S function Phi, A}
S_{\nu}=L(\nu)I+\tilde\Phi(\nu_o,\nu)E_1+A(\nu_o,\nu)E_2.
\end{equation}
If now $\Phi$ is the null form, then, for all $\nu\in\R^{1,1},$ $S_{\nu}$ is an homothetie. Thus $S_{\nu}=L(\nu)I,$ and (\ref{S function Phi, A}) also holds in that case (observe that $A$ is null by (\ref{identity Phi,A})). Note that here $\nu_o$ and $(e_1,e_2)$ may be arbitrarily chosen. 

Formula (\ref{S function Phi, A}) proves that the map $\Theta$ is injective

We now prove that $\Theta$ is surjective. We fix $(L,\Phi,A)\in P.$ We first assume that $\Phi$ is not the null form, and we fix $\nu_o\in\R^{1,1}$ such that $\Phi(\nu_o)=1.$  We denote by $(e_1,e_2)$ the canonical basis of $\R^2,$ and, for all $\nu\in\R^{1,1},$ we define $S_{\nu}$  the symmetric operator on $\R^2$ whose matrix in $(e_1,e_2)$ is given by (\ref{S function Phi, A}). The map $\nu\mapsto S_{\nu}$ is linear and, for all $\nu\in\R^{1,1},$ 
$$\frac{1}{2}\tr(S_{\nu})=L(\nu),$$ 
\begin{eqnarray*}
\left(\frac{1}{2}\tr(S_{\nu})\right)^2-\det(S_{\nu})&=&L^2(\nu)-\left|\begin{array}{cc}
L(\nu)+\tilde\Phi(\nu_o,\nu) & A(\nu_o,\nu)\\
A(\nu_o,\nu) & L(\nu)-\tilde\Phi(\nu_o,\nu) 
\end{array}\right|\\
&=&\Phi(\nu),
\end{eqnarray*}
and
$$\frac{1}{2}\left[S_{\nu_o},S_{\nu}\right]= \left[S_{\nu_o}^o,S_{\nu}^o\right]_F=\left|\begin{array}{cc} 1& \tilde\Phi(\nu_o,\nu)\\0&A(\nu_o,\nu)\end{array}\right|=A(\nu_o,\nu).$$
Note that the last identity implies that
$$\frac{1}{2}\left[S_{\nu_1},S_{\nu_2}\right]=A(\nu_1,\nu_2)$$
for all $\nu_1,\nu_2\in\R^{1,1},$ since the two terms of this identity are skew-symmetric in $\nu_1,\nu_2.$ Thus, the quadratic map $q:\R^2\rightarrow\R^{1,1}$ such that $\langle q,\nu\rangle=\langle S_{\nu}.,.\rangle$ satisfies $\Theta([q])=(L,\Phi,A).$
\end{proof}
By the natural $SO_{1,1}$-action on $Q(\R^2,\R^{1,1})/SO_2,$ the forms $L_{[q]}, \Phi_{[q]}$ transform as
$$L_{g.[q]}= L_{[q]}\circ g^{-1}, \Phi_{g.[q]}=\Phi_{[q]}\circ g^{-1},$$
whereas the form $A_{[q]}$ is kept invariant. Let us consider the following action of $SO_{1,1}$ on $P:$ for all $g\in SO_{1,1},$ for all $(L,\Phi,A)\in P,$
\begin{equation}\label{action SO1,1}
g.(L,\Phi,A):=(L\circ g^{-1},\Phi\circ g^{-1},A).
\end{equation}
The map $\Theta$ is  $SO_{1,1}$-equivariant and thus induces a bijective map
\begin{equation}\label{bijection theta bar}
\overline{\Theta}:SO_{1,1}\setminus Q(\R^2,\R^{1,1})/SO_2\rightarrow SO_{1,1}\setminus P.
\end{equation}
Observe that formula (\ref{identity Phi,A}) permits to recover $A$ (up to sign) from $\Phi.$ The description of the quotient set $\displaystyle{SO_{1,1}\setminus Q(\R^2,\R^{1,1})/SO_2}$ will thus be achieved with the simultaneous reduction of  the forms $L_{[q]}$ and $\Phi_{[q]}$ in $\R^{1,1}.$

\subsection{Invariants on the quotient set}
For the discussion, we first define invariants on  $\displaystyle{SO_{1,1}\setminus Q(\R^2,\R^{1,1})/SO_2}$ associated to $L_{[q]},\ Q_{[q]},\ A_{[q]}$ and $\Phi_{[q]}:$

\begin{definition}
We consider:
\\
\\1- the vector $\H\in\R^{1,1}$ such that, for all $\nu\in \R^{1,1}, $ $\langle\H,\nu\rangle=L_{[q]}(\nu),$
and its norm $$|\H|^2:=\langle\H,\H\rangle;$$
2- the two real numbers 
$$K:=\tr Q_{[q]}\mbox{ and } \Delta:=\det Q_{[q]},$$
where $\tr Q_{[q]}\mbox{ and } \det Q_{[q]}$ are the trace and the determinant of the symmetric operator of $\R^{1,1}$ associated to $Q_{[q]}$ by the scalar product on $\R^{1,1};$
\\
\\3- the real number $K_N$ such that
$$A_{[q]}=\frac{1}{2}K_N\omega_0,$$
where $\omega_0$ is the determinant in the canonical basis of $\R^{1,1}$ (the canonical volume form on $\R^{1,1}).$
\end{definition}
The numbers $|\H|^2,K,K_N,\Delta$ are invariant by the $SO_{1.1}$-action on $[q]\in Q(\R^2,\R^{1,1})/SO_2,$ and thus define invariants on $\displaystyle{SO_{1,1}\setminus Q(\R^2,\R^{1,1})/SO_2.}$

The invariants of the quadratic form $\Phi_{[q]}$ have the following simple expressions in terms of these invariants:
\begin{lemma} \label{Lemma tr det Phi}
Let $\Phi_{[q]}:=L^2_{[q]}-Q_{[q]}$ and $u_{\Phi}$ the symmetric operator on $\R^{1,1}$ such that  $$\Phi_{[q]}(\nu)=\langle u_{\Phi}(\nu),\nu\rangle$$ for all $\nu\in\R^{1,1}.$ Denoting by  $\tr\Phi_{[q]}$ and $\det\Phi_{[q]}$ its trace and its determinant, we get:
\begin{equation}\label{lemma tr det Phi}
\tr\Phi_{[q]}=|\H|^2-K\mbox{ and }\det\Phi_{[q]}=-\frac{1}{4}K_N^2.
\end{equation}
\end{lemma}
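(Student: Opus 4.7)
The plan is to work in the canonical basis $(e_1,e_2)$ of $\R^{1,1}$, where $\langle e_1,e_1\rangle=1$ and $\langle e_2,e_2\rangle=-1$. The key preliminary observation I would record is a general dictionary valid for any symmetric bilinear form $\beta$ on $\R^{1,1}$ with associated operator $u_\beta$ (defined by $\beta(\nu)=\langle u_\beta\nu,\nu\rangle$): namely $\tr u_\beta=\beta(e_1)-\beta(e_2)$ and $\det u_\beta=-\bigl(\beta(e_1)\beta(e_2)-\tilde\beta(e_1,e_2)^2\bigr)$. Both identities follow by writing the matrix of $u_\beta$ in $(e_1,e_2)$ directly, or equivalently by noting that if $B$ is the Gram matrix of $\beta$ and $G=\mathrm{diag}(1,-1)$ is the matrix of the metric, then $u_\beta$ has matrix $G^{-1}B$; the minus sign in the determinant reflects $\det G=-1$.

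For the trace formula, linearity of the trace reduces the problem to computing $\tr L^2_{[q]}$, since $\tr Q_{[q]}=K$ by definition. The quadratic form $L^2_{[q]}$ is the square of the linear form $\nu\mapsto\langle\H,\nu\rangle$, so its associated symmetric operator is the rank-one map $\nu\mapsto\langle\H,\nu\rangle\H$, whose trace is the scalar $\langle\H,\H\rangle=|\H|^2$. Subtracting $K$ produces the first identity.

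For the determinant formula, I would apply the general dictionary above to $\beta=\Phi_{[q]}$ with $\nu_1=e_1$, $\nu_2=e_2$, so that $\det u_\Phi=-\bigl(\Phi(e_1)\Phi(e_2)-\tilde\Phi(e_1,e_2)^2\bigr)$. Now invoke the identity (\ref{identity Phi,A}) of Lemma \ref{Lemma identity Phi,A}, which replaces the bracketed expression by $A(e_1,e_2)^2$. Since $A_{[q]}=\tfrac{1}{2}K_N\,\omega_0$ and the canonical volume form satisfies $\omega_0(e_1,e_2)=1$, one has $A(e_1,e_2)^2=K_N^2/4$, and the second identity $\det\Phi_{[q]}=-K_N^2/4$ follows.

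The only real obstacle is careful sign bookkeeping: the Lorentzian signature flips the sign when passing between the determinant of a bilinear form's Gram matrix and the determinant of its associated symmetric operator, and this is precisely what produces the negative sign in $\det\Phi_{[q]}$. Everything else is immediate from identity (\ref{identity Phi,A}) and the definitions of the invariants $|\H|^2$, $K$, and $K_N$.
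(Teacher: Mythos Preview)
Your argument is correct and follows essentially the same strategy as the paper: compute the trace and determinant of $u_\Phi$ in a convenient basis and invoke identity~(\ref{identity Phi,A}) for the determinant. The only cosmetic differences are that the paper works in the null basis $(N_1,N_2)$ rather than the orthonormal basis (which it denotes $(u_1,u_2)$, not $(e_1,e_2)$), and computes the trace by expanding $\tilde\Phi(N_1,N_2)$ directly rather than via your cleaner observation that the operator associated to $L^2$ is rank one with trace $|\H|^2$.
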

Here and below, we denote by $(u_1,u_2)$ the canonical basis of $\R^{1,1},$ and by $(N_1,N_2)$ the basis defined by
$$N_1=\frac{\sqrt{2}}{2}\left(u_1+u_2\right),\ N_2=\frac{\sqrt{2}}{2}\left(u_2-u_1\right).$$ 
The basis $(N_1,N_2)$ is positively oriented, and $N_1$ and  $N_2$ are null and future-directed vectors  such that $\langle N_1,N_2\rangle=-1.$ Moreover, to simplify the notation the forms $L_{[q]},Q_{[q]},\Phi_{[q]}$ will be denoted by $L,Q,\Phi.$
\begin{proof}
The matrix of $u_{\Phi}$ in the basis $(N_1,N_2)$ is
\begin{equation}
-\left(\begin{array}{cc}
\tilde\Phi(N_1,N_2)&\Phi(N_2)\\
\Phi(N_1)&\tilde\Phi(N_1,N_2)
\end{array}
\right).
\end{equation}
Thus, denoting by $\tilde Q$ the polar form of $Q,$  
\begin{eqnarray*}
\tr\Phi=-2\tilde\Phi(N_1,N_2)&=&-2L(N_1)L(N_2)+2\tilde Q(N_1,N_2)\\
&=&-2L(N_1)L(N_2)-\tr Q\\
&=&|\H|^2-K,
\end{eqnarray*}
and
$$\det\Phi=\tilde\Phi(N_1,N_2)^2-\Phi(N_1)\Phi(N_2)=-\frac{1}{4}K_N^2$$
by (\ref{identity Phi,A}).
\end{proof}
We now give some formulas to compute these invariants.  We write the quadratic map $q$ in the form $q=q_1N_1+q_2N_2,$ and we denote by  
$$\left(\begin{array}{cc}
x&z\\
z&y
\end{array}\right),\ \left(\begin{array}{cc}
u&w\\
w&v
\end{array}\right)$$
the matrices of $q_1$ and $q_2$ respectively in the canonical basis $(e_1,e_2)$ of $\R^2.$ 
\begin{lemma} \label{invariants function xyzuvw} 
The following formulas hold:
\begin{equation}\label{expr H}
\H=\frac{1}{2}(x+y)N_1+\frac{1}{2}(u+v)N_2,
\end{equation}
\begin{equation}\label{expr H2 K K_N}
|\H|^2=-\frac{1}{2}(x+y)(u+v) ,\ K=-xv-uy+2zw,\ K_N=-z(u-v)+w(x-y),
\end{equation}
and
$$\Delta=\frac{1}{4}K^2-(uv-w^2)(xy-z^2).$$ 
\end{lemma}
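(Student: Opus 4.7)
The strategy is to push everything into the null basis $(N_1,N_2)$, where the Gram matrix of $\langle\cdot,\cdot\rangle$ is $\begin{pmatrix}0 & -1\\-1 & 0\end{pmatrix}$ but where the pairings $\langle q,N_i\rangle$ are especially easy to compute. Since $q=q_1N_1+q_2N_2$ and $\langle N_1,N_1\rangle=\langle N_2,N_2\rangle=0$, $\langle N_1,N_2\rangle=-1$, the quadratic forms on $\R^2$ attached to $N_1$ and $N_2$ are simply
\[
\langle q,N_1\rangle=-q_2,\qquad \langle q,N_2\rangle=-q_1,
\]
so $S_{N_1}$ and $S_{N_2}$ have matrices $-\bigl(\begin{smallmatrix}u & w\\ w & v\end{smallmatrix}\bigr)$ and $-\bigl(\begin{smallmatrix}x & z\\ z & y\end{smallmatrix}\bigr)$ in the canonical basis of $\R^2$.

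From this the formulas for $\H$, $|\H|^2$ and $K_N$ are immediate. Writing $\H=\alpha N_1+\beta N_2$, the defining identity $\langle\H,\nu\rangle=L(\nu)=\tfrac12\tr S_\nu$ applied at $\nu=N_1,N_2$ gives $-\beta=-\tfrac12(u+v)$ and $-\alpha=-\tfrac12(x+y)$, producing (\ref{expr H}); the expression for $|\H|^2$ then follows from $\langle N_1,N_2\rangle=-1$. For $K_N$, a direct computation of the commutator $[S_{N_1},S_{N_2}]$ yields the off-diagonal entry $w(x-y)-z(u-v)$, and since $\omega_0(N_1,N_2)=1$ (the change of basis matrix from $(u_1,u_2)$ to $(N_1,N_2)$ has determinant $1$), the relation $A=\tfrac12 K_N\,\omega_0$ evaluated at $(N_1,N_2)$ gives $K_N=-z(u-v)+w(x-y)$.

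The slightly more delicate point, which I expect to be the main bookkeeping obstacle, is computing the trace and determinant of the symmetric operator $u_Q$ on $\R^{1,1}$ associated to $Q$, because in the null basis the Gram matrix is off-diagonal. Writing down the matrix of the bilinear form $Q$ in $(N_1,N_2)$ and multiplying by the inverse of the Gram matrix on the left gives
\[
\mathrm{Mat}(u_Q)_{(N_1,N_2)}=-\begin{pmatrix}\tilde Q(N_1,N_2) & Q(N_2)\\ Q(N_1) & \tilde Q(N_1,N_2)\end{pmatrix},
\]
so $\tr u_Q=-2\tilde Q(N_1,N_2)$ and $\det u_Q=\tilde Q(N_1,N_2)^2-Q(N_1)Q(N_2)$. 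One computes $Q(N_1)=uv-w^2$, $Q(N_2)=xy-z^2$ from the matrices of $S_{N_i}$, and the polar form via $\tilde Q(N_1,N_2)=\tfrac12\bigl(Q(N_1+N_2)-Q(N_1)-Q(N_2)\bigr)=\tfrac12(uy+xv-2wz)$, using linearity $S_{N_1+N_2}=S_{N_1}+S_{N_2}$.

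Substituting gives $K=\tr u_Q=-xv-uy+2zw$, and for the determinant one observes $\tilde Q(N_1,N_2)^2=\tfrac14 K^2$, whence $\Delta=\tfrac14K^2-(uv-w^2)(xy-z^2)$, concluding the proof.
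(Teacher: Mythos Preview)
Your proof is correct and follows essentially the same route as the paper: compute $S_{N_1}$ and $S_{N_2}$ explicitly, read off $L(N_i)$ and $Q(N_i)$, then extract the invariants from the matrix of $u_Q$ in the null basis $(N_1,N_2)$ and from the commutator $[S_{N_1},S_{N_2}]$. The only cosmetic difference is that the paper writes the matrix of $S_\nu$ for a general $\nu=\nu_1N_1+\nu_2N_2$ and reads $Q(\nu)$ off as a polynomial in $\nu_1,\nu_2$, whereas you evaluate directly at $N_1$, $N_2$ and recover the cross term by polarization; both yield the same matrix of $u_Q$ and the same conclusions.
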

\begin{proof}
Let $\nu=\nu_1N_1+\nu_2N_2\in\R^{1,1}.$ The matrix of $q_{\nu}$ in $(e_1,e_2)$ is
\begin{equation}\label{matrix q_nu}
-\left(\begin{array}{cc}
x\nu_2+u\nu_1&z\nu_2+w\nu_1\\
z\nu_2+w\nu_1& y\nu_2+v\nu_1
\end{array}\right).
\end{equation}
Its trace is
\begin{equation}\label{formula L}
2L(\nu)=-((u+v)\nu_1+(x+y)\nu_2),
\end{equation}
which gives (\ref{expr H}) and the expression of $|\H|^2.$ Its determinant is
\begin{equation}\label{formula Q}
Q(\nu)=(uv-w^2)\nu_1^2+(xy-z^2)\nu_2^2+(xv+uy-2zw)\nu_1\nu_2.
\end{equation}
The matrix in $(N_1,N_2)$ of the symmetric operator $u_Q$ associated to $Q$ is
\begin{equation}\label{matrix u_Q}
-\left(\begin{array}{cc}
\frac{1}{2}(xv+uy)-zw&xy-z^2\\
uv-w^2&\frac{1}{2}(xv+uy)-zw
\end{array}\right).
\end{equation}
Taking the trace and the determinant we obtain the expressions of $K$ and $\Delta.$ From (\ref{matrix q_nu}), 
$$S_{N_1}=-\left(\begin{array}{cc}u&w\\w&v\end{array}\right)\mbox{ and } S_{N_2}=-\left(\begin{array}{cc}x&z\\z&y\end{array}\right).$$
Thus 
$$[S_{N_1},S_{N_2}]=\left(\begin{array}{cc}0&z(u-v)-w(x-y)\\
-z(u-v)+w(x-y)&0\end{array}\right),$$
and we obtain the expression of $K_N.$
\end{proof}
We now introduce some notation which is especially adapted to the study of the form $\Phi.$ This notation is linked to the description of the curvature ellipse; see Section \ref{section curvature ellipse}. We set
\begin{equation}\label{def h1 h2}
h_1=\frac{1}{2}(x+y),\ h_2=\frac{1}{2}(u+v),
\end{equation}
and
\begin{equation}\label{def mu nu}
\mu=\frac{1}{2}(x-y),\ \nu=z,\ \mu'=\frac{1}{2}(u-v),\ \nu'=w.
\end{equation}
\begin{lemma} \label{invariants function mu nu}
The matrix of $\Phi$ in $(N_1,N_2)$ is
\begin{equation}\label{matrix Phi}
\left(\begin{array}{cc} 
\nu'^2+\mu'^2&\nu'\nu+\mu\mu'\\
\nu'\nu+\mu\mu'&\nu^2+\mu^2
\end{array}\right),
\end{equation}
and the following formulas hold:
\begin{equation}\label{formula 1 mu nu}
\nu'\nu+\mu\mu'=\frac{1}{2}(K-|\H|^2),\ (\nu\mu'-\nu'\mu)^2=\frac{1}{4}K_N^2,
\end{equation}
and
\begin{equation}\label{formula 2 mu nu}
(\nu^2+\mu^2)(\nu'^2+\mu'^2)=\frac{1}{4}\left((K-|\H|^2)^2+K_N^2\right).
\end{equation}
\end{lemma}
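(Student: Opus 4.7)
The plan is to reduce everything to the interpretations of $\Phi$ and $\tilde\Phi$ already established in Lemma \ref{Lemma identity Phi,A}, namely $\Phi(\nu)=\|S_\nu^o\|_F^2$ and $\tilde\Phi(\nu_1,\nu_2)=\langle S_{\nu_1}^o,S_{\nu_2}^o\rangle_F$, applied to $\nu_1=N_1,\nu_2=N_2$. From Lemma \ref{invariants function xyzuvw} we already know $S_{N_1}=-\begin{pmatrix}u&w\\w&v\end{pmatrix}$ and $S_{N_2}=-\begin{pmatrix}x&z\\z&y\end{pmatrix}$, so the very first step is to subtract the traces and rewrite the traceless parts in the orthonormal basis $(E_1,E_2)$ of $F$. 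Using (\ref{def h1 h2}) and (\ref{def mu nu}), this immediately yields $S_{N_1}^o=-(\mu' E_1+\nu' E_2)$ and $S_{N_2}^o=-(\mu E_1+\nu E_2)$.

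Next I would plug these into $\Phi(N_1)=\|S_{N_1}^o\|_F^2$, $\Phi(N_2)=\|S_{N_2}^o\|_F^2$ and $\tilde\Phi(N_1,N_2)=\langle S_{N_1}^o,S_{N_2}^o\rangle_F$. Since $(E_1,E_2)$ is orthonormal for $\langle.,.\rangle_F$, these evaluate respectively to $\mu'^2+\nu'^2$, $\mu^2+\nu^2$ and $\mu\mu'+\nu\nu'$. The matrix of the bilinear form $\Phi$ in the basis $(N_1,N_2)$ is then precisely (\ref{matrix Phi}).

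For the scalar identities, the first equation in (\ref{formula 1 mu nu}) follows from the trace formula in Lemma \ref{Lemma tr det Phi}: indeed, as noted in that proof, $\tr\Phi=-2\tilde\Phi(N_1,N_2)$, and combining with $\tr\Phi=|\H|^2-K$ gives $\nu\nu'+\mu\mu'=\frac{1}{2}(K-|\H|^2)$. For the second equation, I would combine $\det\Phi=\tilde\Phi(N_1,N_2)^2-\Phi(N_1)\Phi(N_2)=-\frac{1}{4}K_N^2$ from Lemma \ref{Lemma tr det Phi} with the classical Lagrange identity in $\R^2$: $(\mu^2+\nu^2)(\mu'^2+\nu'^2)-(\mu\mu'+\nu\nu')^2=(\nu\mu'-\nu'\mu)^2$, which rearranges to $(\nu\mu'-\nu'\mu)^2=\frac{1}{4}K_N^2$.

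Finally, formula (\ref{formula 2 mu nu}) is just the same Lagrange identity rewritten as $(\mu^2+\nu^2)(\mu'^2+\nu'^2)=(\mu\mu'+\nu\nu')^2+(\nu\mu'-\nu'\mu)^2$, with the two right-hand terms substituted using the two identities of (\ref{formula 1 mu nu}). There is no real obstacle here; the only thing that requires a bit of care is tracking the sign conventions coming from the matrix of $u_\Phi$ in $(N_1,N_2)$ (the basis $(N_1,N_2)$ is null, so $\tr u_\Phi$ and $\det u_\Phi$ are computed from the off-diagonal pairing with $\langle N_1,N_2\rangle=-1$), but this is exactly the computation carried out in Lemma \ref{Lemma tr det Phi} and is reused verbatim.
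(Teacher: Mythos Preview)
Your proof is correct and follows essentially the same approach as the paper: both derive the matrix (\ref{matrix Phi}) by a short computation and then deduce (\ref{formula 1 mu nu}) and (\ref{formula 2 mu nu}) from Lemma~\ref{Lemma tr det Phi} together with the Lagrange identity. The only minor variation is that you obtain the entries of (\ref{matrix Phi}) via the interpretation $\Phi(\nu)=\|S_\nu^o\|_F^2$ from Lemma~\ref{Lemma identity Phi,A} (writing $S_{N_i}^o$ in the orthonormal basis $(E_1,E_2)$), whereas the paper expands $\Phi(\nu)=L(\nu)^2-Q(\nu)$ directly from formulas (\ref{formula L}) and (\ref{formula Q}) and then substitutes (\ref{def mu nu}); the two computations are equivalent and equally short.
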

\begin{proof}
Recalling the formulas (\ref{formula L}) and (\ref{formula Q}), we obtain
\begin{eqnarray*}
\Phi(\nu)&=&\left(\frac{1}{4}(u+v)^2-(uv-w^2)\right)\nu_1^2+\left(\frac{1}{4}(x+y)^2-(xy-z^2)\right)\nu_2^2\\&&+\left(\frac{1}{2}(x+y)(u+v)-(xv+uy-2zw)\right)\nu_1\nu_2.
\end{eqnarray*}
Using (\ref{def mu nu}), we easily get
$$\Phi(\nu)=(\nu'^2+\mu'^2)\nu_1^2+(\nu^2+\mu^2)\nu_2^2+2(\nu'\nu+\mu\mu')\nu_1\nu_2,$$
and the matrix (\ref{matrix Phi}) of $\Phi$ in $(N_1,N_2).$ The matrix of $u_{\Phi}$ in the basis $(N_1,N_2)$ is then given by
\begin{equation}\label{matrix u_Phi}
-\left(\begin{array}{cc} 
\nu'\nu+\mu\mu' & \nu^2+\mu^2\\
\nu'^2+\mu'^2&\nu'\nu+\mu\mu'
\end{array}\right).
\end{equation}
Lemma \ref{Lemma tr det Phi} and the Lagrange identity imply the claimed formulas.
\end{proof}

\subsection{The simultaneous reduction of $\Phi_{[q]}$ and $L_{[q]}$}
We now reduce the operator $u_{\Phi}.$ Here, in contrast with the euclidean case, it may be not diagonalizable. 
\begin{lemma}\label{case Phi diag}
The operator $u_{\Phi}$ is diagonalizable if and only if $\Phi=0$ or
\begin{equation}\label{condition u_Phi diagonalisable}
K_N^2+\left(|\H|^2-K\right)^2>0.
\end{equation}
If (\ref{condition u_Phi diagonalisable}) holds, there is a unique basis of unit eigenvectors of $u_{\Phi},$ $(\tilde u_1,\tilde u_2),$ which is positively oriented and such that $\tilde u_1$ is spacelike  and $\tilde u_2$ is timelike and future-directed; moreover, the basis $(\tilde u_1,\tilde u_2)$ is orthogonal and the matrix of $u_{\Phi}$ in $(\tilde u_1,\tilde u_2)$ is diagonal with diagonal entries $(a^2,-b^2),$ where
\begin{equation}\label{equation a}
a^2=\frac{1}{2}\left(\left(|\H|^2-K\right)+\sqrt{K_N^2+\left(|\H|^2-K\right)^2}\right),
\end{equation}
\begin{equation}\label{equation b}
b^2=\frac{1}{2}\left(-\left(|\H|^2-K\right)+\sqrt{K_N^2+\left(|\H|^2-K\right)^2}\right).
\end{equation}
In $(\tilde u_1,\tilde u_2),$ the vector $\H$ is $\alpha \tilde u_1+\beta \tilde u_2,$ with
\begin{equation}\label{equation alpha}
\alpha^2=\frac{1}{\sqrt{K_N^2+\left(|\H|^2-K\right)^2}}\left(\Delta+a^2|\H|^2+\frac{1}{4}K_N^2\right),
\end{equation}
\begin{equation}\label{equation beta}
\beta^2=\frac{1}{\sqrt{K_N^2+\left(|\H|^2-K\right)^2}}\left(\Delta-b^2|\H|^2+\frac{1}{4}K_N^2\right).
\end{equation}
\end{lemma}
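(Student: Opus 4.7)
The plan is to extract the spectral data of $u_{\Phi}$ directly from Lemma \ref{Lemma tr det Phi}: since $\tr u_{\Phi} = |\H|^2 - K$ and $\det u_{\Phi} = -\tfrac{1}{4} K_N^2$, the characteristic polynomial has non-negative discriminant $(|\H|^2 - K)^2 + K_N^2$. Solving gives real eigenvalues $\frac{1}{2}\bigl((|\H|^2 - K) \pm \sqrt{(|\H|^2-K)^2 + K_N^2}\bigr)$, which are precisely $a^2$ and $-b^2$ as defined by (\ref{equation a})--(\ref{equation b}).

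For the diagonalizability dichotomy, I would observe that $u_{\Phi}$ is self-adjoint for the indefinite inner product on $\R^{1,1}$: when condition (\ref{condition u_Phi diagonalisable}) holds the eigenvalues are distinct and the operator is automatically diagonalizable. The remaining case $(|\H|^2 - K)^2 + K_N^2 = 0$ amounts to $K_N = 0$ and $|\H|^2 = K$, which via (\ref{formula 1 mu nu})--(\ref{formula 2 mu nu}) forces $\nu\nu' + \mu\mu' = 0$ and $(\nu^2+\mu^2)(\nu'^2+\mu'^2)=0$. The matrix (\ref{matrix u_Phi}) of $u_{\Phi}$ in $(N_1,N_2)$ is then strictly nilpotent, hence non-diagonalizable, except when $\mu=\nu=\mu'=\nu'=0$, i.e., $\Phi=0$.

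For the eigenframe, standard self-adjointness gives $\langle \tilde u_1, \tilde u_2 \rangle = 0$ whenever the eigenvalues differ. In $\R^{1,1}$ the Lorentzian orthogonal complement of a null line coincides with that line itself, so two independent orthogonal lines cannot both be null: one eigenspace is spacelike, the other timelike. Normalising, requiring $\tilde u_2$ to be future-directed, and orienting $(\tilde u_1, \tilde u_2)$ positively then determines the frame uniquely. The identity $\Phi(\tilde u_1) = a^2 \langle \tilde u_1, \tilde u_1 \rangle = a^2 \geq 0$ forces the spacelike direction to carry the eigenvalue $a^2$ (and the timelike one $-b^2$); the sub-case of a single vanishing eigenvalue ($K_N = 0$, $|\H|^2 \neq K$) is handled by the same orthogonal-complement argument applied to the nonzero eigenvector.

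The main obstacle will be deriving the formulas (\ref{equation alpha})--(\ref{equation beta}). Decomposing $\H = \alpha \tilde u_1 + \beta \tilde u_2$ in the orthonormal eigenbasis yields $|\H|^2 = \alpha^2 - \beta^2$, while the orthogonality $\tilde\Phi(\tilde u_1, \tilde u_2) = 0$ gives a second equation $\Phi(\H) = a^2 \alpha^2 + b^2 \beta^2$. Inverting this linear system, using $a^2 + b^2 = \sqrt{(|\H|^2 - K)^2 + K_N^2}$ and $a^2 - b^2 = |\H|^2 - K$, reduces the statement to the intrinsic identity $\Phi(\H) = |\H|^4 - K|\H|^2 + \Delta + \tfrac{1}{4} K_N^2$. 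To establish this, I would decompose $u_{\Phi} = u_{L^2} - u_Q$, where $u_{L^2}(\nu) = \langle \H, \nu \rangle \H$ is the rank-one symmetric operator of $L^2$, satisfying $\tr u_{L^2} = |\H|^2$, $\det u_{L^2} = 0$ and $\tr(u_{L^2} u_Q) = Q(\H)$. Applying the two-dimensional determinant identity $\det(A - B) = \det A + \det B - (\tr A)(\tr B) + \tr(AB)$ together with Lemma \ref{Lemma tr det Phi} then yields $-\tfrac{1}{4} K_N^2 = \Delta - K|\H|^2 + Q(\H)$, whence $\Phi(\H) = L(\H)^2 - Q(\H)$ takes the required form and the closed expressions for $\alpha^2$ and $\beta^2$ follow.
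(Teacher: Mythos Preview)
Your argument is correct, and it differs from the paper's in two places worth noting. For the eigenframe, the paper writes down the eigenvectors explicitly in the null basis $(N_1,N_2)$ as
\[
\xi^{(1)}=\sqrt{\mu^2+\nu^2}\,N_1-\sqrt{\mu'^2+\nu'^2}\,N_2,\qquad
\xi^{(2)}=\sqrt{\mu^2+\nu^2}\,N_1+\sqrt{\mu'^2+\nu'^2}\,N_2,
\]
and checks the eigenvalue relations and causal characters by direct computation from the matrix (\ref{matrix u_Phi}); you instead argue abstractly from self-adjointness and the non-negativity of $\Phi$. For (\ref{equation alpha})--(\ref{equation beta}), the paper proves a coordinate identity (its formula (\ref{Delta function xi}))
\[
\Delta=\tfrac14\bigl(K-|\H|^2+|\xi|^2\bigr)\bigl(K+|\H|^2-|\xi|^2\bigr)+[\H,\xi]^2
\]
and specialises $\xi$ to $\xi^{(1)}$ and $\xi^{(2)}$, reading off $\alpha^2,\beta^2$ from the mixed products $[\H,\xi]^2$. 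Your route via the linear system $|\H|^2=\alpha^2-\beta^2$, $\Phi(\H)=a^2\alpha^2+b^2\beta^2$ together with the $2\times 2$ identity $\det(u_{L^2}-u_Q)=\det u_{L^2}+\det u_Q-\tr u_{L^2}\,\tr u_Q+\tr(u_{L^2}u_Q)$ gives the same result intrinsically, without ever naming the eigenvectors. The trade-off is that the paper's explicit $\xi^{(1)},\xi^{(2)}$ and the intermediate formula (\ref{Delta function xi}) are reused later (in Lemma \ref{case Phi no diag}, Proposition \ref{lemma degenerated ellipse} and Lemma \ref{lemma prop segment}), so its computational detour pays dividends downstream; your proof is cleaner for this lemma in isolation but does not produce those by-products.
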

\begin{proof}
By Lemma \ref{Lemma tr det Phi}, the characteristic polynomial of $u_\Phi$ is
\begin{equation}\label{pol car}
\det(u_{\Phi}-X .\mbox{Id})=X^2-(|\H|^2-K)X-\frac{1}{4}K_N^2.
\end{equation}
Condition  (\ref{condition u_Phi diagonalisable}) means that its discriminant is positive.
Thus, if  (\ref{condition u_Phi diagonalisable}) holds, the eigenvalues of $u_{\Phi}$ are distinct, $u_{\Phi}$ is diagonalizable, and its eigenvalues $a^2,\ -b^2$ are given by (\ref{equation a}), (\ref{equation b}).
Conversely, we assume that $u_{\Phi}$ is diagonalizable, and we denote by $\lambda_1,\lambda_2$ its eigenvalues. Since $\lambda_1\lambda_2=\det u_{\Phi}\leq 0,$ we see that $\lambda_1\neq\lambda_2$ or $u_{\Phi}=0.$ Thus, if $u_{\Phi}\neq 0,$ the characteristic polynomial (\ref{pol car}) has a positive discriminant, which gives condition (\ref{condition u_Phi diagonalisable}).  

We now explicit the eigenvectors of $u_{\Phi}.$ Recall that the matrix of $u_{\Phi}$ in the basis $(N_1,N_2)$ is given by (\ref{matrix u_Phi}). Setting
\begin{eqnarray*}
\xi^{(1)}&=&\sqrt{\mu^2+\nu^2}N_1-\sqrt{\mu'^2+\nu'^2}N_2\\
\xi^{(2)}&=&\sqrt{\mu^2+\nu^2}N_1+\sqrt{\mu'^2+\nu'^2}N_2,
\end{eqnarray*}
we verify by a direct computation that 
\begin{eqnarray*}
u_{\Phi}(\xi^{(1)})&=&\left(-(\nu'\nu+\mu\mu')+\sqrt{\nu^2+\mu^2}\sqrt{\mu'^2+\nu'^2}\right)\xi^{(1)}\\
&=&a^2\xi^{(1)},
\end{eqnarray*}
and 
\begin{eqnarray*}
u_{\Phi}(\xi^{(2)})&=&-\left((\nu'\nu+\mu\mu')+\sqrt{\nu^2+\mu^2}\sqrt{\mu'^2+\nu'^2}\right)\xi^{(2)}\\
&=&-b^2\xi^{(2)}.
\end{eqnarray*}
Thus, setting $$\tilde u_1=\frac{\xi^{(1)}}{|\xi^{(1)}|}\mbox{ and } \tilde u_2=\frac{\xi^{(2)}}{|\xi^{(2)}|},$$
we obtain the basis $(\tilde u_1,\tilde u_2)$ as claimed in the lemma.

We now prove identities (\ref{equation alpha}) and (\ref{equation beta}): we consider
$$\xi=\xi_1N_1+\xi_2 N_2,$$
with $\xi_1=\sqrt{\mu^2+\nu^2},\ \xi_2=\pm \sqrt{\mu'^2+\nu'^2}.$ From Lemma \ref{invariants function xyzuvw},
$$\Delta=\frac{1}{4}K^2-(xy-z^2)(uv-w^2),$$
with $xy-z^2=h_1^2-\xi_1^2,$ $uv-w^2=h_2^2-\xi_2^2$ (recall (\ref{def h1 h2}) and (\ref{def mu nu})).
Since, by a direct computation,
\begin{eqnarray}
(h_1^2-\xi_1^2)(h_2^2-\xi_2^2)&=&(h_1h_2-\xi_1\xi_2)^2-(h_1\xi_2-h_2\xi_1)^2\\
&=&\frac{1}{4}(|\H|^2-|\xi|^2)^2-[\H,\xi]^2,
\end{eqnarray}
we have
\begin{eqnarray}
\Delta&=&\frac{1}{4}K^2-\frac{1}{4}(|\H|^2-|\xi|^2)^2+[\H,\xi]^2\\
&=&\frac{1}{4}(K-|\H|^2+|\xi|^2)(K+|\H|^2-|\xi|^2)+[\H,\xi]^2.\label{Delta function xi}
\end{eqnarray}
We first take $\xi=\sqrt{\mu^2+\nu^2}N_1+\sqrt{\mu'^2+\nu'^2}N_2.$ We have 
$$-|\xi|^2=\sqrt{K_N^2+\left(|\H|^2-K\right)^2}=a^2+b^2.$$
Observing that
$$-K+|\H|^2=a^2-b^2\mbox{ and }K_N^2=4a^2b^2,$$
we obtain 
\begin{equation}\label{formula Delta a}
\Delta=-a^2|\H|^2-\frac{1}{4}K_N^2+[\H,\xi]^2.
\end{equation}
Since $\xi$ is proportional to $\tilde u_2,$ we have $[\H,\xi]^2=-\alpha^2|\xi|^2,$ and we finally obtain (\ref{equation alpha}).

If we now take $\xi=\sqrt{\mu^2+\nu^2}N_1-\sqrt{\mu'^2+\nu'^2}N_2,$ we have $|\xi|^2=a^2+b^2,$ and analogous computations give
\begin{equation}\label{formula Delta b}
\Delta=b^2|\H|^2-\frac{1}{4}K_N^2+[\H,\xi]^2.
\end{equation}
Here $\xi$ is proportional to $\tilde u_1,$ and we get (\ref{equation beta}).
\end{proof}

\begin{lemma}\label{case Phi no diag}
The operator $u_{\Phi}$ is not diagonalizable if and only if 
\begin{equation}\label{condition Phi no diag}
\Phi\neq 0 \mbox{ and }K_N=|\H|^2-K=0.
\end{equation}
 In that case there is a unique positively oriented basis $(\tilde N_1,\tilde N_2)$ of $\R^{1,1},$ with $\tilde N_1$ and $\tilde N_2$ null and future-directed vectors satisfying $\langle \tilde N_1,\tilde N_2\rangle=-1,$ such that the matrix of $u_{\Phi}$ in $(\tilde N_1,\tilde N_2)$ is
$$A_1=\left(\begin{array}{c c} 0&0\\ -1&0\end{array}\right)\mbox{ or }A_2=\left(\begin{array}{c c} 0&-1\\ 0&0\end{array}\right).$$
Suppose that the matrix of $u_{\Phi}$ is $A_1.$ Then the vector $\H$ reads 
\begin{equation}\label{equation h1}
\H=\pm\left(\sqrt{\Delta}\tilde N_1-\frac{K}{2\sqrt{\Delta}}\tilde N_2\right)
\end{equation}
 if $\Delta\neq 0,$ and is of the form
\begin{equation}\label{definition zeta}
\H=\zeta\tilde N_2
\end{equation}
 if $\Delta=0,$ where the number $\zeta\in\R$ defines a new invariant (the four invariants $|\H|^2,$ $K,$ $K_N,$ $\Delta$ vanish in that case). 

If the matrix of $u_{\Phi}$ is $A_2,$ the same holds, switching the indices 1 and 2.
\end{lemma}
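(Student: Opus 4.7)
The plan is to settle the characterization of non-diagonalizability first, then build the intrinsic basis $(\tilde N_1,\tilde N_2)$ by exploiting the nilpotent structure of $u_\Phi$, and finally expand $\H$ in that basis. The equivalence stated in (\ref{condition Phi no diag}) is essentially a corollary of the preceding Lemma \ref{case Phi diag}: that lemma already shows diagonalizability when $\Phi=0$ or when $K_N^2+(|\H|^2-K)^2>0$, and conversely, when these scalars vanish while $\Phi\neq 0$, Lemma \ref{Lemma tr det Phi} gives $\tr u_\Phi=\det u_\Phi=0$, so (\ref{pol car}) reads $X^2$; a symmetric operator whose only eigenvalue is $0$ can be diagonalizable only if it is the zero operator, which is excluded. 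Hence $u_\Phi$ is nilpotent of rank one.

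For the basis, the key observation is that $u_\Phi^2=0$ and $u_\Phi\neq 0$ force $\ker u_\Phi=\mathrm{Im}\,u_\Phi$ to be one-dimensional, and symmetry of $u_\Phi$ with respect to the Lorentzian metric makes this line null: any $v\in\ker u_\Phi$ satisfies $\langle v,u_\Phi(w)\rangle=\langle u_\Phi(v),w\rangle=0$ for every $w\in\R^{1,1}$, so $v$ is orthogonal to $\mathrm{Im}\,u_\Phi=\ker u_\Phi\ni v$, forcing $\langle v,v\rangle=0$. I would then label the future-directed generator of $\ker u_\Phi$ as $\tilde N_2$ if it lies on one of the two future null rays (giving case $A_1$) and as $\tilde N_1$ otherwise (giving case $A_2$); the partner is the unique future-directed null vector on the opposite ray satisfying $\langle\tilde N_1,\tilde N_2\rangle=-1$ and yielding a positively oriented basis, up to a positive boost. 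Writing $u_\Phi(\text{partner})=c\cdot(\text{kernel generator})$, non-negativity of $\Phi$ (Lemma \ref{Lemma identity Phi,A}) applied to the partner yields $\Phi(\text{partner})=-c\geq 0$, and $c\neq 0$ since $u_\Phi\neq 0$; the remaining boost then uniquely normalizes $c=-1$, so the basis is unique and the matrix is $A_1$ or $A_2$.

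Finally, for the components of $\H$ in case $A_1$, I write $\H=h_1'\tilde N_1+h_2'\tilde N_2$. The hypothesis $|\H|^2=K$ gives $-2h_1'h_2'=K$. For a second equation, I compute $Q=L^2-\Phi$ in this basis from $L(\cdot)=\langle\H,\cdot\rangle$ and the known normal form of $\Phi$, then pass to the operator $u_Q$ via the Gram matrix of $(\tilde N_1,\tilde N_2)$ (whose determinant is $-1$); a short determinant evaluation yields $\Delta=\det u_Q=h_1'^2$. If $\Delta\neq 0$, solving for $h_1',h_2'$ gives formula (\ref{equation h1}), the $\pm$ recording the square-root ambiguity that is not resolved by the invariants $K$ and $\Delta$. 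If $\Delta=0$, then $h_1'=0$, forcing $|\H|^2=0$ and hence $K=0$, and $\H=\zeta\tilde N_2$ with $\zeta:=h_2'$ an $SO_{1,1}$-invariant (since the basis is intrinsic) that is not captured by the four standard invariants, all of which vanish. The $A_2$ case is obtained by swapping indices. The main obstacle I anticipate is the careful handling of the basis normalization — establishing $c<0$ via $\Phi\geq 0$ and reducing it to $-1$ by the unique positive boost compatible with future-direction and orientation — after which everything reduces to routine computations in a null basis.
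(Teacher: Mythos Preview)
Your argument is correct, and the overall structure matches the paper's, but the two proofs diverge in how they construct the basis $(\tilde N_1,\tilde N_2)$ and how they extract $\Delta$. The paper works entirely in the fixed basis $(N_1,N_2)$ using the explicit matrix (\ref{matrix u_Phi}) in the coordinates $\mu,\nu,\mu',\nu'$: from (\ref{condition Phi no diag}) and Lemma~\ref{invariants function mu nu} it concludes $(\nu^2+\mu^2)(\nu'^2+\mu'^2)=0$, then simply rescales $N_1,N_2$ by $\sqrt{\nu'^2+\mu'^2}^{\pm 1}$ to produce $(\tilde N_1,\tilde N_2)$, and for $\H$ it plugs $\xi=\tilde N_2$ into the already-derived identity (\ref{Delta function xi}) to read off $\tilde h_1^2=\Delta$. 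You instead argue intrinsically: $u_\Phi$ is rank-one nilpotent, its kernel equals its image and is forced to be null by self-adjointness, and the sign and size of the off-diagonal entry are fixed by $\Phi\geq 0$ together with a unique positive boost; for $\Delta$ you rebuild $Q=L^2-\Phi$ in the new basis and take $\det u_Q$ directly. Your route avoids the coordinate apparatus $(\mu,\nu,\mu',\nu')$ and explains \emph{why} the kernel must be null and the normalization unique, while the paper's route is shorter because it recycles formula (\ref{Delta function xi}) from the proof of Lemma~\ref{case Phi diag}. Either computation gives $\tilde h_1^{\,2}=\Delta$, and from there the conclusions (\ref{equation h1}) and (\ref{definition zeta}) follow identically.
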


\begin{proof}
The condition (\ref{condition Phi no diag}) directly follows from the condition (\ref{condition u_Phi diagonalisable}) in Lemma \ref{case Phi diag}. Recall the expression (\ref{matrix u_Phi}) of the matrix of $u_{\Phi}$ in the basis $(N_1,N_2).$ From Lemma \ref{invariants function mu nu} the condition (\ref{condition Phi no diag}) implies that
$$(\nu^2+\mu^2)(\nu'^2+\mu'^2)=0,\ \nu\nu'+\mu\mu'=0,\ \nu\mu'-\nu'\mu=0.$$
We suppose that $\nu^2+\mu^2=0.$ The matrix of $u_{\Phi}$ in $(N_1,N_2)$ reads
$$-\left(\begin{array}{cc}0& 0\\\nu'^2+\mu'^2& 0\end{array}\right).$$
Since $\Phi\neq 0,$ we have $\nu'^2+\mu'^2\neq 0.$ We consider the basis
$$\tilde N_1:=\frac{1}{\sqrt{\nu'^2+\mu'^2}}N_1,\ \tilde N_2:=\sqrt{\nu'^2+\mu'^2}N_2.$$
In $(\tilde N_1,\tilde N_2)$ the matrix of $u_{\Phi}$ is the matrix $A_1.$ Note that such a basis is clearly unique.

We now prove the second part of the lemma. We denote by $(\tilde h_1,\tilde h_2)$ the coordinates of $\H$ in $(\tilde N_1,\tilde N_2).$ Using (\ref{Delta function xi}), with here $\xi=\tilde N_2,$ and since $K-|\H|^2=0,$ $|\xi|^2=0$ and $[\H,\xi]^2=\tilde h_1^2,$ we obtain $\tilde h_1^2=\Delta.$ Since moreover $|\H|^2=-2\tilde h_1\tilde h_2=K,$ we obtain (\ref{equation h1}).

If $\Delta=0,$ then $\tilde h_1=0$ and all the previously defined invariants vanish. $\zeta=\tilde h_2$ defines a new invariant.
\end{proof}
\subsection{The classification}
We now gather the results obtained in Lemma \ref{case Phi diag} and Lemma \ref{case Phi no diag}. To state the main theorem we introduce the following notation: we denote by $G$ the subgroup of transformations of $\R^{1,1}$ generated by the reflections with respect to the lines $\R.u_1$ and $\R.u_2;$ identifying these transformations with their matrix in $(u_1,u_2)$, we have
$$G=\left\{\left(\begin{array}{cc}1&0\\0&1\end{array}\right),\left(\begin{array}{cc}-1&0\\0&-1\end{array}\right),\left(\begin{array}{cc}1&0\\0&-1\end{array}\right),\left(\begin{array}{cc}-1&0\\0&1\end{array}\right)\right\}.$$
This group acts on the quotient set $\displaystyle{SO_{1,1}\setminus Q(\R^2,\R^{1,1})/SO_2}$ by composition since $gSO_{1,1}g^{-1}=SO_{1,1}$ for all $g\in G.$ We denote by $G'$ the subgroup of $G$ generated by the reflection with respect to $\R.u_2:$
$$G'=\left\{\left(\begin{array}{cc}1&0\\0&1\end{array}\right),\left(\begin{array}{cc}-1&0\\0&1\end{array}\right)\right\}.$$
\begin{theorem}\label{theorem algebraic classification}
Let $[q]\in \displaystyle{SO_{1,1}\setminus Q(\R^2,\R^{1,1})/SO_2},$ and $K,K_N,|\H|^2,\Delta$ its invariants.
\\
\\1- If $K_N\neq0$ or $|\H|^2-K\neq 0,$ the invariants $K,K_N,|\H|^2,\Delta$ determine $[q]$ up to the action of the group $G.$
\\
\\2- If $K_N=0,\ |\H|^2-K=0,$ and $\Phi\neq 0,$ then
\\a- if $\Delta\neq 0,$  the invariants $\Delta,K$ determine $[q]$ up to the action of $G.$
\\b- if $\Delta=0,$ then the four invariants vanish, and the new invariant $\zeta$ defined in Lemma \ref{case Phi no diag} determines $[q]$ up to the action of $G'.$
\\
\\3- If $\Phi=0$ and $[q]\neq 0,$ the invariant $|\H|^2$ determines $[q]$ up to the action of $G.$ 
\end{theorem}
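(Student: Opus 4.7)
By the bijection (\ref{bijection theta bar}), classifying $[q]$ modulo $SO_{1,1}$ amounts to classifying triples $(L,\Phi,A)\in P$ modulo (\ref{action SO1,1}). Moreover (\ref{identity Phi,A}) recovers $A$ from $\Phi$ up to sign, and this sign is fixed by the invariant $K_N$, so the problem reduces to the simultaneous reduction of the pair $(\H,u_\Phi)$ in $\R^{1,1}$ under $SO_{1,1}$. I then treat the three regimes separately, following the dichotomy of Lemmas \ref{case Phi diag} and \ref{case Phi no diag}.

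\emph{Case 1.} When $K_N^2+(|\H|^2-K)^2>0$, Lemma \ref{case Phi diag} provides a distinguished positively oriented orthonormal basis $(\tilde u_1,\tilde u_2)$ of eigenvectors of $u_\Phi$; an element of $SO_{1,1}$ carries it to $(u_1,u_2)$. In these coordinates $u_\Phi=\mathrm{diag}(a^2,-b^2)$ and $\H=\alpha u_1+\beta u_2$ are prescribed by the four invariants through (\ref{equation a})--(\ref{equation beta}), so the only remaining freedom is the four possible choices of signs for $(\alpha,\beta)$. The four coordinate reflections in $G$ fix the diagonal operator $u_\Phi$ and realize exactly these sign changes, giving part 1.

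\emph{Case 2} ($K_N=|\H|^2-K=0$, $\Phi\neq0$). By Lemma \ref{case Phi no diag}, there is a unique positively oriented future-directed null basis $(\tilde N_1,\tilde N_2)$ in which $u_\Phi$ equals $A_1$ or $A_2$; an $SO_{1,1}$ element carries it to $(N_1,N_2)$. A direct conjugation computation shows that $\mathrm{diag}(-1,1)\in G'$ swaps $N_1$ and $N_2$ and hence interchanges the Jordan forms $A_1,A_2$, so after composing with this element if necessary we may assume $u_\Phi=A_1$ in $(N_1,N_2)$. In part 2a, formula (\ref{equation h1}) gives $\H=\pm\bigl(\sqrt{\Delta}\,N_1-\tfrac{K}{2\sqrt{\Delta}}N_2\bigr)$, and $-I\in G$ preserves $u_\Phi$ while sending $\H\mapsto-\H$, absorbing the $\pm$ and yielding the $G$-classification by $(\Delta,K)$. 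In part 2b, (\ref{definition zeta}) gives $\H=\zeta N_2$, and $-I$ now maps $\zeta\mapsto-\zeta$, so the full group $G$ does not act trivially on the invariant $\zeta$; however $G'$ does, since its nontrivial element only exchanges $A_1\leftrightarrow A_2$ while preserving the value of $\zeta$, yielding the $G'$-classification.

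\emph{Case 3.} If $\Phi=0$ then every $S_\nu=L(\nu)\mathrm{Id}$, so $[q]$ is determined by the vector $\H\in\R^{1,1}$. The $SO_{1,1}$-orbits on $\R^{1,1}\setminus\{0\}$ (four null half-rays, and two branches per nonzero value of $|\H|^2$) are permuted transitively by $G$ within each level set of $|\H|^2$, proving part 3. The main delicate point of the whole argument is the bookkeeping in Case 2: tracking how the four reflections in $G$ act on the null frame $(N_1,N_2)$ and on the Jordan forms $A_1,A_2$ of $u_\Phi$, to explain why $G$ suffices in parts 1, 2a, 3 but must be replaced by the smaller subgroup $G'$ in part 2b.
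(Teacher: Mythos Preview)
Your proposal is correct and follows essentially the same route as the paper's own proof: both reduce via the bijection $\overline{\Theta}$ to the simultaneous reduction of $(L,\Phi)$ under $SO_{1,1}$, invoke Lemmas \ref{case Phi diag} and \ref{case Phi no diag} to reach a canonical frame, and then count the residual ambiguity as a $G$- or $G'$-orbit. Your treatment of Case~2 is in fact slightly more explicit than the paper's, since you verify directly that $\mathrm{diag}(-1,1)$ swaps $N_1\leftrightarrow N_2$ (hence $A_1\leftrightarrow A_2$) and that $-I$ absorbs the sign in (\ref{equation h1}), whereas the paper simply asserts that the four (resp.\ two) normal forms ``differ by the action of $G$ (resp.\ $G'$)''.
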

\begin{remark}
In fact, if $K_N\neq 0,$ the invariants $K,K_N,|\H|^2$ and $\Delta$ determine $[q]$ up to the action of $\pm id_{\R^{1,1}}$ only, since the action of a reflection of $\R^{1,1}$ changes $K_N$ to $-K_N.$
\end{remark}
\begin{proof}
First recall the definition of the bijective map $\overline\Theta$ in (\ref{bijection theta bar}). 

1- If $K_N\neq 0$ or $|\H|^2-K\neq 0,$ by Lemma \ref{case Phi diag}, $\overline{\Theta}([q])$ is the class of $(L,\Phi,A)\in P$ where the forms $L,\Phi$ and $A$ are defined in the \emph{canonical basis} $(u_1,u_2)$ of $\R^{1,1}$ by
$$L=(\alpha,-\beta),\ \Phi=\left(\begin{array}{cc}a^2&0\\0&b^2\end{array}\right)\mbox{ and }A=\frac{1}{2}K_N\left(\begin{array}{cc}0&1\\-1&0\end{array}\right),$$
with $a^2,b^2,\alpha,\beta$ satisfying (\ref{equation a}), (\ref{equation b}), (\ref{equation alpha}) and (\ref{equation beta}) (more precisely, recalling (\ref{action SO1,1}), if $g\in SO_{1,1}$ is such that $g(u_1)=\tilde u_1,$ $g(u_2)=\tilde u_2,$ where $(\tilde u_1,\tilde u_2)$ is the basis given by Lemma \ref{case Phi diag}, we have $g.(L,\Phi,A)=(L_q,\Phi_q,A_q))$. Since $\alpha$ and $\beta$ are determined up to sign by (\ref{equation alpha}) and (\ref{equation beta}), four classes correspond to the given set of invariants (note that classes coincide in the particular case where $\alpha$ or $\beta$ vanishes). They are obtained from one of them by the action of $G.$
\\2- If $K_N=0,$ $|\H|^2-K=0$ and $\Phi\neq 0,$ then, by Lemma \ref{case Phi no diag}, $\overline{\Theta}([q])$ is the class of $(L,\Phi,A)\in P$ where, in the basis $(N_1,N_2)$ of $\R^{1,1},$
$$L=-(h_2,h_1),\ \Phi=\left(\begin{array}{cc}1&0\\0&0\end{array}\right)\mbox{ or }\left(\begin{array}{cc}0&0\\0&1\end{array}\right)\mbox{ and }A=0.$$
If $\Phi$ is the first matrix, $(h_1,h_2)$ is given by (\ref{equation h1}) (if $\Delta\neq 0$) or by (\ref{definition zeta}) (if $\Delta=0$), and if $\Phi$ is the second matrix, we have to switch the indices 1 and 2 in (\ref{equation h1}), (\ref{definition zeta}).  Thus, if $\Delta\neq 0$ four classes correspond to the given set of invariants $\Delta,K$ (two classes, given by the choice of the $\pm$ sign in (\ref{equation h1}), for each one of the two possible matrices for $\Phi$) and if $\Delta=0$ two classes correspond to the invariant $\zeta$ (by (\ref{definition zeta}), only one class for each one of the two possible matrices for $\Phi$). They differ by the action of $G$ and $G'$ respectively, and we get the result. 
\\3- If $\Phi=0$ and $[q]\neq 0,$ $\overline{\Theta}([q])$ is the class of $(L,0,0)\in P$ with $L=\langle\H,.\rangle,$ where 
$$\H=\pm N_1,\pm N_2, \pm\lambda u_1,\pm\lambda u_2,$$
with $\lambda>0.$   
\end{proof}
\section{The curvature ellipse}\label{section curvature ellipse}
In this  section we describe the geometric properties of the curvature ellipse associated to a quadratic map in terms of its invariants (Theorem \ref{theorem ellipse classification}). The curvature ellipse associated to the second fundamental form of a spacelike surface in four-dimensional Minkowski space was first considered in \cite{IPR}. The curvature ellipse $E$ associated to $q:\R^2\rightarrow\R^{1,1}$ is defined as the subset of $\R^{1,1}$
$$E:=\left\{q(v):\ v\in\R^2,\ |v|=1\right\}.$$
It is parameterized by 
$$\theta\in\R\mapsto \H+X(\theta)N_1+Y(\theta)N_2\in\R^{1,1}$$ 
with $\H=h_1N_1+h_2N_2$ and
\begin{equation}\label{param ellipse theta}
X(\theta)=\mu\cos 2\theta+\nu\sin 2\theta,\ Y(\theta)=\mu'\cos 2\theta+\nu'\sin 2\theta,
\end{equation}
where the coefficients $h_1,h_2,\mu,\nu,\mu'$ and $\nu'$ are defined by the expressions (\ref{def h1 h2}) and (\ref{def mu nu}).

The two next lemmas give an explicit equation of the curvature ellipse:
\begin{lemma}
If $\H+XN_1+YN_2$ belongs to the curvature ellipse, then $(X,Y)$ verifies
\begin{eqnarray}\label{equation ellipse mu nu}
\varepsilon(X,Y)&=&\left(\nu'^2+\mu'^2\right)X^2+\left(\nu^2+\mu^2\right)Y^2-2\left(\nu'\nu+\mu\mu'\right)XY\nonumber\\&=&\left(\nu\mu'-\nu'\mu\right)^2.
\end{eqnarray}
\end{lemma}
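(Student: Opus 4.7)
The plan is to eliminate the parameter $\theta$ from the parametrization
$$X=\mu\cos 2\theta+\nu\sin 2\theta,\qquad Y=\mu'\cos 2\theta+\nu'\sin 2\theta$$
given in (\ref{param ellipse theta}), and to recognize the resulting relation as $\varepsilon(X,Y)=(\nu\mu'-\nu'\mu)^{2}$.

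First I would treat the generic case $\nu\mu'-\nu'\mu\neq 0$. Viewing the parametrization as a linear system in the unknowns $c=\cos 2\theta$ and $s=\sin 2\theta$, the nonvanishing of the determinant $\mu\nu'-\nu\mu'$ lets me solve by Cramer's rule:
$$c=\frac{\nu' X-\nu Y}{\mu\nu'-\nu\mu'},\qquad s=\frac{-\mu' X+\mu Y}{\mu\nu'-\nu\mu'}.$$
Substituting into the identity $c^{2}+s^{2}=1$ and clearing denominators yields
$$(\nu' X-\nu Y)^{2}+(\mu Y-\mu' X)^{2}=(\mu\nu'-\nu\mu')^{2},$$
which after expansion is exactly $\varepsilon(X,Y)=(\nu\mu'-\nu'\mu)^{2}$.

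For the degenerate case $\nu\mu'-\nu'\mu=0$, the right-hand side is zero, and the two vectors $(\mu,\mu')$ and $(\nu,\nu')$ are collinear. The parametrization then ranges over a line segment, and the pair $(X,Y)$ lies on the common line spanned by $(\mu,\mu')$ (or $(\nu,\nu')$). A direct check shows that $\varepsilon$ is a positive-semidefinite quadratic form whose kernel contains this line, so $\varepsilon(X,Y)=0$, matching the right-hand side. Since this degenerate case requires only a short verification, I expect no real obstacle: the only point to be careful about is the sign and ordering of coefficients in $\varepsilon$, which is settled by the elementary expansion above.
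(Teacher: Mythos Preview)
Your proof is correct and follows essentially the same route as the paper: solve the linear relation between $(\cos 2\theta,\sin 2\theta)$ and $(X,Y)$, then impose $\cos^2 2\theta+\sin^2 2\theta=1$. The only cosmetic difference is that the paper multiplies by the adjugate matrix rather than the inverse, obtaining $(\mu\nu'-\mu'\nu)\cos 2\theta=\nu'X-\nu Y$ and $(\mu\nu'-\mu'\nu)\sin 2\theta=-\mu'X+\mu Y$ directly; squaring and adding then yields the identity uniformly, without any need for your separate treatment of the degenerate case $\nu\mu'-\nu'\mu=0$.
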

\begin{proof}
Recalling (\ref{param ellipse theta}) we have
\begin{equation}\label{matrices XY theta}
\left(\begin{array}{c}X\\Y\end{array}\right)=\left(\begin{array}{cc}\mu&\nu\\\mu'&\nu'\end{array}\right)\left(\begin{array}{c}\cos 2\theta\\\sin 2\theta\end{array}\right).
\end{equation}
Thus
\begin{equation}\label{equation theta mu nu}
(\mu\nu'-\mu'\nu)\left(\begin{array}{c}\cos 2\theta\\\sin 2\theta\end{array}\right)=\left(\begin{array}{cc}\nu'&-\nu\\-\mu'&\mu\end{array}\right)\left(\begin{array}{c}X\\Y\end{array}\right).
\end{equation}
The identity $\cos^22\theta+\sin^22\theta=1$ gives the result.
\end{proof}
\begin{lemma}\label{lemma equation ellipse}
If $K_N\neq 0,$ equation (\ref{equation ellipse mu nu}) is the equation of the curvature ellipse. If $K_N=0,$ the curvature ellipse is the segment defined by
\begin{equation}\label{equation segment}
\sqrt{\nu'^2+\mu'^2}X=\pm\sqrt{\nu^2+\mu^2}Y
\end{equation}
with $-\sqrt{\nu^2+\mu^2}\leq X\leq\sqrt{\nu^2+\mu^2}$ and $-\sqrt{\nu'^2+\mu'^2}\leq Y\leq\sqrt{\nu'^2+\mu'^2}.$ The sign in (\ref{equation segment}) is the sign of $\nu\nu'+\mu\mu'.$
\end{lemma}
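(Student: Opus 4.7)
My plan is to use the linear parameterization (\ref{matrices XY theta}) and split the argument according to the rank of the $2\times 2$ matrix with rows $(\mu,\nu)$ and $(\mu',\nu'),$ whose squared determinant equals $K_N^2/4$ by (\ref{formula 1 mu nu}).

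If $K_N\neq 0,$ this matrix is invertible, so (\ref{equation theta mu nu}) expresses $(\cos 2\theta,\sin 2\theta)$ as a linear function of $(X,Y).$ The previous lemma already shows that every point of the ellipse satisfies (\ref{equation ellipse mu nu}); conversely, any $(X,Y)$ with $\varepsilon(X,Y)=(\nu\mu'-\nu'\mu)^2$ produces via (\ref{equation theta mu nu}) a pair $(c,s)$ with $c^2+s^2=1,$ hence a $\theta\in\R$ with $(c,s)=(\cos 2\theta,\sin 2\theta),$ so $(X,Y)$ lies on the ellipse. This settles the first assertion.

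If $K_N=0,$ then $\nu\mu'-\nu'\mu=0,$ and the Lagrange identity applied to the real vectors $(\mu,\nu)$ and $(\mu',\nu')$ gives
$$(\nu\nu'+\mu\mu')^2=(\nu^2+\mu^2)(\nu'^2+\mu'^2).$$
Consequently the left-hand side of (\ref{equation ellipse mu nu}) is a perfect square,
$$\varepsilon(X,Y)=\left(\sqrt{\nu'^2+\mu'^2}\,X-\sigma\sqrt{\nu^2+\mu^2}\,Y\right)^2,$$
and equation (\ref{equation ellipse mu nu}) reduces to the linear equation (\ref{equation segment}). The bounds on $X$ and $Y$ then follow at once from $|\alpha\cos 2\theta+\beta\sin 2\theta|\leq\sqrt{\alpha^2+\beta^2},$ with both extrema attained as $\theta$ varies over $\R.$

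The step that I expect to require the most care is the identification $\sigma=\mathrm{sgn}(\nu\nu'+\mu\mu').$ I would pin it down by substituting the parameterization (\ref{param ellipse theta}) into $\sqrt{\nu'^2+\mu'^2}\,X-\sigma\sqrt{\nu^2+\mu^2}\,Y=0$ and requiring the coefficients of $\cos 2\theta$ and $\sin 2\theta$ to vanish; combining those two equations yields $\sqrt{(\mu^2+\nu^2)(\mu'^2+\nu'^2)}=\sigma(\nu\nu'+\mu\mu'),$ which determines the sign as claimed. Degenerate subcases, in which $(\mu,\mu')$ or $(\nu,\nu')$ vanishes (so that the ellipse is an axis segment, or collapses to the single point $\H$), are compatible with the same description and can be inspected directly.
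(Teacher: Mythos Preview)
Your proposal is correct and follows essentially the same route as the paper: for $K_N\neq 0$ you invert the matrix in (\ref{equation theta mu nu}) and use $\cos^2 2\theta+\sin^2 2\theta=1$ exactly as the paper does, and for $K_N=0$ you invoke the Lagrange identity to turn $\varepsilon(X,Y)$ into a perfect square and read off the bounds from the parameterization. The only stylistic difference is that the paper identifies the sign directly from the cross term $-2(\nu\nu'+\mu\mu')XY$ in $\varepsilon$, whereas you re-derive it by substituting the parameterization; both yield the same $\sigma=\mathrm{sgn}(\nu\nu'+\mu\mu')$.
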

\begin{proof}
We first assume that $K_N\neq 0,$ and we suppose that $(X,Y)$ is a solution of (\ref{equation ellipse mu nu}). Equation (\ref{matrices XY theta}) in $\theta$ is solvable if and only if
$$\frac{1}{\left(\nu\mu'-\nu'\mu\right)^2}\left((\nu'X-\nu Y)^2+(-\mu' X+\mu Y)^2\right)=1,$$
which is exactly (\ref{equation ellipse mu nu}). This gives the result.

We now assume that $K_N=0.$ The Lagrange identity thus gives
\begin{equation}\label{Lagrange identity K_N=0}
(\nu'\nu+\mu\mu')^2=(\nu^2+\mu^2)(\nu'^2+\mu'^2).
\end{equation}
Equation (\ref{equation ellipse mu nu}) thus reads
$$\left(\nu'^2+\mu'^2\right)X^2+\left(\nu^2+\mu^2\right)Y^2\pm 2\sqrt{\nu^2+\mu^2}\sqrt{\nu'^2+\mu'^2}XY=0,$$
where the sign $\pm$ is the opposite sign of $\nu'\nu+\mu\mu.$ Thus
$$\left(\sqrt{\nu'^2+\mu'^2}X\pm\sqrt{\nu^2+\mu^2}Y\right)^2=0,$$
which gives (\ref{equation segment}). The inequalities on $X,Y$ directly come from (\ref{matrices XY theta}).
\end{proof}
We now link the quadratic form $\Phi$ to the curvature ellipse. Let us denote by $\overline{E}$ the curvature ellipse with its interior points
$$\overline{E}=\left\{q(u):\ u\in\R^2,\ |u|\leq 1\right\}.$$
We first define as in the euclidean space  the support function $h$ of the convex body $\overline{E}\subset\R^{1,1}$ with respect to some interior point $O'\in \overline{E}$ (taken as the new origin of $\R^{1,1}$): for all $\nu\in\R^{1,1},$
$$h(\nu):=\sup_{\nu'\in \overline{E}}\langle\nu',\nu\rangle .$$
\begin{lemma}
The support function of the curvature ellipse with respect to its interior point $\H$ is the square root of $\Phi$: for all $\nu\in\R^{1,1},$ 
$$h(\nu)=\sqrt{\Phi(\nu)}.$$
In particular, if $K_N=0$ and if $\xi\in\R^{1,1}$ is such that the curvature ellipse is the segment $[\H-\xi,\H+\xi],$ then, for all $\nu\in\R^{1,1},$
\begin{equation}\label{Phi segment}
\Phi(\nu)=\langle\xi,\nu\rangle^2.
\end{equation}
\end{lemma}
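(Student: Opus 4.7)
The plan is to compute the support function directly from the parametrization of the ellipse and recognize $\Phi$ through the matrix formula from Lemma \ref{invariants function mu nu}.

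First, I would translate so that $\H$ is the origin: a point of $\overline{E}$ has the form $\H + X N_1 + Y N_2$ with $(X,Y)$ in the (filled) image of the map $\theta \mapsto (X(\theta),Y(\theta))$ given by (\ref{param ellipse theta}). Given $\nu = \nu_1 N_1 + \nu_2 N_2$, using $\langle N_1,N_1\rangle=\langle N_2,N_2\rangle=0$ and $\langle N_1,N_2\rangle=-1$, I would get
$$\langle X N_1 + Y N_2,\,\nu\rangle = -X\nu_2 - Y\nu_1.$$
Since $\overline{E}$ is convex, the supremum of this linear functional is attained on the boundary $E$, so $h(\nu) = \sup_{\theta\in\R}\bigl(-X(\theta)\nu_2 - Y(\theta)\nu_1\bigr)$.

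Next, substituting $X(\theta)=\mu\cos 2\theta+\nu\sin 2\theta$ and $Y(\theta)=\mu'\cos 2\theta+\nu'\sin 2\theta$, the expression becomes
$$-(\mu\nu_2+\mu'\nu_1)\cos 2\theta - (\nu\nu_2+\nu'\nu_1)\sin 2\theta,$$
whose supremum over $\theta$ is $\sqrt{(\mu\nu_2+\mu'\nu_1)^2+(\nu\nu_2+\nu'\nu_1)^2}$. Expanding this square and regrouping in $\nu_1,\nu_2$ yields
$$(\mu'^2+\nu'^2)\nu_1^2 + 2(\mu\mu'+\nu\nu')\nu_1\nu_2 + (\mu^2+\nu^2)\nu_2^2,$$
which is exactly $\Phi(\nu)$ by the matrix (\ref{matrix Phi}) of $\Phi$ in $(N_1,N_2)$ given in Lemma \ref{invariants function mu nu}. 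Since $\Phi$ is non-negative (Lemma \ref{Lemma identity Phi,A}), taking the square root gives $h(\nu)=\sqrt{\Phi(\nu)}$ as claimed.

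For the degenerate case $K_N=0$, the curvature ellipse is the segment $[\H-\xi,\H+\xi]$ by Lemma \ref{lemma equation ellipse}. Taking $\H$ as the origin, the support function of the segment $[-\xi,\xi]$ is clearly
$$h(\nu) = \sup_{t\in[-1,1]} \langle t\xi,\nu\rangle = |\langle\xi,\nu\rangle|,$$
so squaring the identity $h(\nu)=\sqrt{\Phi(\nu)}$ already proved gives $\Phi(\nu)=\langle\xi,\nu\rangle^2$, which is (\ref{Phi segment}). The only step requiring some care is the reduction of the support function to a sum of squares; the rest is a direct identification with Lemma \ref{invariants function mu nu}, so I do not expect any real obstacle.
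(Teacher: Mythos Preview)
Your proof is correct but takes a genuinely different route from the paper's. The paper's argument is operator-theoretic: it observes that $\langle q(u)-\H,\nu\rangle = \langle S_\nu^o(u),u\rangle$ for unit $u$, so the support function is the largest eigenvalue of the traceless symmetric operator $S_\nu^o$, namely $\|S_\nu^o\|_F$; it then invokes the identification $\Phi(\nu)=\|S_\nu^o\|_F^2$ from (\ref{interpretation Phi}), established inside the proof of Lemma~\ref{Lemma identity Phi,A}. Your approach instead plugs in the explicit parametrization (\ref{param ellipse theta}) and reduces to the elementary fact $\sup_\theta(A\cos 2\theta+B\sin 2\theta)=\sqrt{A^2+B^2}$, after which you match the resulting quadratic expression in $(\nu_1,\nu_2)$ with the matrix (\ref{matrix Phi}) of $\Phi$ from Lemma~\ref{invariants function mu nu}. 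The paper's argument is shorter and more conceptual, re-using the auxiliary euclidean space $F$ of traceless operators set up in Section~\ref{section def invariants}; yours is more self-contained and entirely computational, with no appeal to $S_\nu^o$ or $F$. Both handle the degenerate segment case in the same way, by reading off the support function of $[-\xi,\xi]$ directly from the definition.
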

\begin{proof}
By the very definition of $h,$ and recalling the notation used in the proof of Lemma \ref{identity Phi,A},
$$h(\nu)=\sup_{\{u\in\R^2:\ |u|\leq 1\}}\langle II(u)-\H,\nu\rangle=\sup_{\{u\in\R^2:\ |u|\leq 1\}}\langle S_{\nu}^o(u),u\rangle=\|S_{\nu}^o\|_F.$$ 
By (\ref{interpretation Phi}) this last quantity is $\sqrt{\Phi(\nu)}.$ We then obtain the last claim of the lemma from the definition of $h.$
\end{proof}
If $K_N\neq 0$ the function $u_{\Phi}$ is invertible, and we define
\begin{equation}\label{Phi star}
\Phi^*(\nu):=\langle\nu,u_{\Phi}^{-1}(\nu)\rangle.
\end{equation}
The function $\Phi^*:\R^{1,1}\rightarrow \R$ furnishes an intrinsic equation of the curvature ellipse:
\begin{lemma}\label{lemma ellipse Phi star} We suppose that $K_N\neq 0.$ For all $\nu\in\R^{1,1},$ $\H+\nu$ belongs to the ellipse $E$ if and only if
$$\Phi^*(\nu)=1.$$
\end{lemma}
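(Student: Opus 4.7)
My plan is to prove the statement by direct computation in the basis $(N_1,N_2)$, reducing the claim to the explicit equation of the ellipse already obtained in Lemma \ref{lemma equation ellipse}.

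First, I would use the explicit matrix of $u_{\Phi}$ in $(N_1,N_2)$ from (\ref{matrix u_Phi}). Since $\det u_{\Phi}=-\frac{1}{4}K_N^2$ by Lemma \ref{Lemma tr det Phi}, the assumption $K_N\neq 0$ ensures invertibility and I can write down $u_{\Phi}^{-1}$ via the $2\times 2$ cofactor formula; up to the scalar $\tfrac{1}{\det u_{\Phi}}=-\tfrac{4}{K_N^2}$, its matrix in $(N_1,N_2)$ is just the transposed cofactor of (\ref{matrix u_Phi}).

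Next, I would expand $\Phi^*(\nu)=\langle\nu,u_{\Phi}^{-1}(\nu)\rangle$ for $\nu=XN_1+YN_2$, keeping in mind that the metric has matrix $\left(\begin{smallmatrix}0&-1\\-1&0\end{smallmatrix}\right)$ in $(N_1,N_2)$ since $\langle N_1,N_2\rangle=-1$ and $N_1,N_2$ are null. Multiplying the metric matrix by $u_{\Phi}^{-1}$ and taking the quadratic form $(X,Y)\mapsto(X,Y)\cdot(\,\cdot\,)\cdot(X,Y)^T$ yields, after the signs cancel nicely,
\[
\Phi^*(\nu)=\frac{4}{K_N^2}\Bigl((\nu'^2+\mu'^2)X^2-2(\nu'\nu+\mu\mu')XY+(\nu^2+\mu^2)Y^2\Bigr).
\]
Comparing with (\ref{equation ellipse mu nu}) and using $(\nu\mu'-\nu'\mu)^2=\tfrac{1}{4}K_N^2$ from (\ref{formula 1 mu nu}), the condition $\Phi^*(\nu)=1$ is exactly $\varepsilon(X,Y)=(\nu\mu'-\nu'\mu)^2$, which by Lemma \ref{lemma equation ellipse} characterizes the curvature ellipse when $K_N\neq 0$. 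This gives both directions of the equivalence.

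The only mild obstacle is bookkeeping: one must carefully track the minus signs coming simultaneously from the Lorentzian metric pairing in $(N_1,N_2)$ and from the minus sign in front of the matrix (\ref{matrix u_Phi}) of $u_{\Phi}$. These signs conspire to produce the correct positive combination matching $\varepsilon(X,Y)$; a cleaner conceptual alternative would be to invoke the general polarity between an ellipse and its support function (the equation of an ellipse with support function $h^2=\langle T\cdot,\cdot\rangle$ is $\langle T^{-1}w,w\rangle=1$), but since the ambient metric is Lorentzian I would prefer the explicit verification above to avoid any doubt.
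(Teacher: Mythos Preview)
Your proposal is correct and follows essentially the same route as the paper's own proof: the paper also computes $\Phi^*(\nu)$ directly from the matrix (\ref{matrix u_Phi}) of $u_\Phi$ in $(N_1,N_2)$, obtaining
\[
\Phi^*(\nu)=\frac{1}{(\nu\mu'-\nu'\mu)^2}\bigl((\nu'^2+\mu'^2)X^2+(\nu^2+\mu^2)Y^2-2(\nu'\nu+\mu\mu')XY\bigr),
\]
and then invokes the ellipse equation (\ref{equation ellipse mu nu}). Your version simply writes the prefactor as $4/K_N^2$ and then appeals to (\ref{formula 1 mu nu}) to identify it with $1/(\nu\mu'-\nu'\mu)^2$, and you make the sign bookkeeping with the Lorentzian pairing in $(N_1,N_2)$ more explicit; otherwise the arguments are identical.
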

\begin{proof}
Using (\ref{matrix u_Phi}) we get
$$\Phi^*(\nu)=\frac{1}{\left(\nu\mu'-\nu'\mu\right)^2}\left(\left(\nu'^2+\mu'^2\right)X^2+\left(\nu^2+\mu^2\right)Y^2-2\left(\nu'\nu+\mu\mu'\right)XY\right),$$
which implies the result since the equation of the curvature ellipse is given by (\ref{equation ellipse mu nu}).
\end{proof}
We deduce the following descriptions of the ellipse:
\begin{proposition}\label{lemma no degenerated ellipse}
If $K_N\neq 0,$ the curvature ellipse is not degenerated; its axis are directed by the eigenvectors of $u_\Phi,$ and the squared length of the semi-axis are the eigenvalues $a^2,\ -b^2$ of $u_{\Phi}$ given by (\ref{equation a}),(\ref{equation b}).
\end{proposition}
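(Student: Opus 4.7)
The plan is to deduce this directly from Lemma \ref{lemma ellipse Phi star}, which gives the intrinsic equation $\Phi^*(\nu)=1$ of the ellipse (with respect to the origin $\H$), combined with the diagonalization of $u_\Phi$ furnished by Lemma \ref{case Phi diag}. Since $K_N\neq 0$ implies $K_N^2+(|\H|^2-K)^2>0$, the hypothesis of Lemma \ref{case Phi diag} is satisfied, so there exists a positively oriented Lorentz-orthonormal basis $(\tilde u_1,\tilde u_2)$ of eigenvectors of $u_\Phi$, with $\tilde u_1$ spacelike and $\tilde u_2$ timelike and future-directed, and associated eigenvalues $a^2$ and $-b^2$ given by (\ref{equation a}) and (\ref{equation b}). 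Moreover, $K_N\neq 0$ forces $\sqrt{K_N^2+(|\H|^2-K)^2}>\bigl||\H|^2-K\bigr|$, so $a^2>0$ and $b^2>0$ strictly.

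Next I would rewrite $\Phi^*$ in this eigenbasis. For $\nu=x\tilde u_1+y\tilde u_2$, the relations $u_\Phi(\tilde u_1)=a^2\tilde u_1$ and $u_\Phi(\tilde u_2)=-b^2\tilde u_2$ give
$$u_\Phi^{-1}(\nu)=\frac{x}{a^2}\tilde u_1-\frac{y}{b^2}\tilde u_2,$$
and then $\langle\tilde u_1,\tilde u_1\rangle=1$, $\langle\tilde u_2,\tilde u_2\rangle=-1$, $\langle\tilde u_1,\tilde u_2\rangle=0$ yield
$$\Phi^*(\nu)=\frac{x^2}{a^2}+\frac{y^2}{b^2}.$$
By Lemma \ref{lemma ellipse Phi star}, the curvature ellipse (translated so that $\H$ is the origin) is therefore the locus $x^2/a^2+y^2/b^2=1$ in the coordinates associated to $(\tilde u_1,\tilde u_2)$. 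Since $a^2,b^2>0$ this is a genuine, non-degenerate ellipse.

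Finally, the two $\Z/2$-symmetries $x\mapsto -x$ and $y\mapsto -y$ of this equation show that the lines $\R\tilde u_1$ and $\R\tilde u_2$ are the axes of symmetry of the ellipse; these are precisely the lines directed by the eigenvectors of $u_\Phi$. The axis $\R\tilde u_1$ meets the ellipse at $\pm a\tilde u_1$, of squared Lorentzian length $a^2\langle\tilde u_1,\tilde u_1\rangle=a^2$, and the axis $\R\tilde u_2$ meets it at $\pm b\tilde u_2$, of squared Lorentzian length $b^2\langle\tilde u_2,\tilde u_2\rangle=-b^2$. This is exactly the claim.

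There is no real obstacle: once Lemmas \ref{case Phi diag} and \ref{lemma ellipse Phi star} are in place the proof is a short algebraic verification. The one point worth flagging explicitly is that the expressions ``axes'' and ``squared length of the semi-axes'' are understood with respect to the Lorentzian metric on $\R^{1,1}$, which is why a semi-axis may have negative squared length; this is the only natural convention here since the operator $u_\Phi$, its eigenvectors, and the intrinsic equation $\Phi^*(\nu)=1$ are all constructed from the Lorentzian inner product.
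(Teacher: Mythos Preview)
Your proof is correct and follows essentially the same approach as the paper: both invoke Lemma \ref{lemma ellipse Phi star} to get the intrinsic equation $\Phi^*(\nu)=1$, then diagonalize $u_\Phi$ via Lemma \ref{case Phi diag} to obtain $\nu_1^2/a^2+\nu_2^2/b^2=1$ in the eigenbasis $(\tilde u_1,\tilde u_2)$. You are slightly more explicit than the paper in checking $a^2,b^2>0$ and in identifying the axes and their squared lengths, but the argument is the same.
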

\begin{proof}
Lemma \ref{lemma ellipse Phi star} and expression (\ref{Phi star}) imply that the curvature ellipse is the set
$$E=\{\H+\nu\in\R^{1,1}:\ \langle u_{\Phi}^{-1}(\nu),\nu\rangle=1\}.$$
Writing $\nu=\nu_1\tilde u_1+\nu_2\tilde u_2$ in the basis $(\tilde u_1,\tilde u_2)$ of eigenvectors of $u_{\Phi}$ given by Lemma \ref{case Phi diag}, we obtain the following: in $(\H,\tilde u_1,\tilde u_2),$ the equation of the ellipse is given by
\begin{equation}\label{Phi star diag}
\frac{1}{a^2}\nu_1^2+\frac{1}{b^2}\nu_2^2=1.
\end{equation}
This implies the proposition.
\end{proof}
\begin{remark}\label{sign Phi star}
The expression of the quadratic form $\Phi^*$ given by the left-hand side term of (\ref{Phi star diag}) shows that it is positive definite.
\end{remark}
\begin{remark}
The sign of $K_N$ has the following meaning: $K_N$ is positive if the ellipse is described by (\ref{param ellipse theta}) in the positive direction when the parameter $\theta$ grows; it is negative otherwise. 
\end{remark}
\begin{proposition}\label{lemma degenerated ellipse}
If $K_N=0$ and $\Phi\neq 0,$ the curvature ellipse degenerates to a segment $[\H-\xi,\H+\xi],$ which is:
\\1- spacelike if $|\H|^2-K>0;$ more precisely, $\xi=\pm a\tilde u_1$ where $a=\sqrt{|\H|^2-K}$ and $\tilde u_1$ is the spacelike unit eigenvector of $u_{\Phi}$ given by Lemma \ref{case Phi diag};
\\2- timelike if $|\H|^2-K<0;$ more precisely, $\xi=\pm b\tilde u_2$ where $b=\sqrt{K-|\H|^2}$ and $\tilde u_2$ is the timelike unit eigenvector of $u_{\Phi}$ given by Lemma \ref{case Phi diag}; 
\\3- lightlike if $|\H|^2-K=0;$ recalling the notation in Lemma \ref{case Phi no diag}, $\xi=\pm\tilde N_2$ if the matrix of $u_{\Phi}$ in $(\tilde N_1,\tilde N_2)$ is $A_1$, and $\xi=\pm\tilde N_1$ if the matrix is $A_2.$
\end{proposition}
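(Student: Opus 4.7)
The plan is to leverage formula (\ref{Phi segment}), which expresses $\Phi$ as a perfect square $\Phi(\nu)=\langle\xi,\nu\rangle^2$ whenever the curvature ellipse degenerates to $[\H-\xi,\H+\xi]$; by Lemma \ref{lemma equation ellipse} this degeneration is exactly the regime $K_N=0$. Polarizing this identity yields at once $u_\Phi(\mu)=\langle\xi,\mu\rangle\,\xi$ for all $\mu\in\R^{1,1}$, so $\xi$ is itself an eigenvector of $u_\Phi$ with eigenvalue $\langle\xi,\xi\rangle$. In particular, the causal character of $\xi$ coincides with the sign of that eigenvalue, and determining $\xi$ amounts to locating the nonzero eigenvalue of $u_\Phi$ and its eigendirection.

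From Lemma \ref{Lemma tr det Phi}, $\tr u_\Phi=|\H|^2-K$ and $\det u_\Phi=-K_N^2/4=0$, so the eigenvalues of $u_\Phi$ are $|\H|^2-K$ and $0$. When $|\H|^2-K>0$, the vector $\xi$ must realize the positive eigenvalue, which by Lemma \ref{case Phi diag} corresponds to the spacelike unit eigenvector $\tilde u_1$ with associated eigenvalue $a^2$; since (\ref{equation a}) collapses to $a^2=|\H|^2-K$ when $K_N=0$, the relation $\langle\xi,\xi\rangle=a^2$ forces $\xi=\pm a\tilde u_1$. The case $|\H|^2-K<0$ is symmetric: the nonzero eigenvalue becomes $-b^2=-(K-|\H|^2)$ with timelike eigenvector $\tilde u_2$, giving $\xi=\pm b\tilde u_2$ and $b=\sqrt{K-|\H|^2}$.

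The remaining case $|\H|^2-K=0$ is precisely the non-diagonalizable regime of Lemma \ref{case Phi no diag}: both eigenvalues vanish, $u_\Phi$ is nilpotent, and $\langle\xi,\xi\rangle=0$, so $\xi$ is lightlike. To single out which null direction $\xi$ lies along, I plug $\xi=\tilde N_2$ into $u_\Phi(\mu)=\langle\xi,\mu\rangle\xi$: using $\langle\tilde N_1,\tilde N_2\rangle=-1$ and $\langle\tilde N_i,\tilde N_i\rangle=0$, one gets $u_\Phi(\tilde N_1)=-\tilde N_2$ and $u_\Phi(\tilde N_2)=0$, which is exactly the matrix $A_1$. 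The analogous check with $\xi=\tilde N_1$ produces $A_2$, matching each matrix in Lemma \ref{case Phi no diag} with the corresponding null vector stated in the proposition.

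The real difficulty here is not conceptual but bookkeeping: matching the scaling of $\xi$ with the \emph{unit} normalization of $\tilde u_1,\tilde u_2$ in Lemma \ref{case Phi diag} and with the \emph{null} normalization $\langle\tilde N_1,\tilde N_2\rangle=-1$ of Lemma \ref{case Phi no diag}, and tracking the sign ambiguity in $\xi$ consistently across the three cases. Once (\ref{Phi segment}) and the eigenstructure of $u_\Phi$ from the two preceding lemmas are in hand, each case reduces to a one-line identification.
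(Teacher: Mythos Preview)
Your argument is correct, and it is actually a bit cleaner than the paper's. Both proofs identify $\xi$ as an eigenvector of $u_\Phi$ with eigenvalue $|\xi|^2=|\H|^2-K$ and then match this against the normalized eigenvectors supplied by Lemmas~\ref{case Phi diag} and~\ref{case Phi no diag}. The difference is in how that identification is obtained. The paper takes the explicit expression $\xi=\sqrt{\mu^2+\nu^2}\,N_1\pm\sqrt{\mu'^2+\nu'^2}\,N_2$ from Lemma~\ref{lemma equation ellipse}, applies the coordinate matrix (\ref{matrix u_Phi}) of $u_\Phi$ to it by hand, and then invokes the formulas of Lemma~\ref{invariants function mu nu} (with $K_N=0$) to rewrite the result as $(|\H|^2-K)\xi$; only in the lightlike case does it fall back on (\ref{Phi segment}). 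You instead polarize (\ref{Phi segment}) once to get the rank-one description $u_\Phi(\mu)=\langle\xi,\mu\rangle\,\xi$, from which the eigenvalue $\langle\xi,\xi\rangle$ and the trace identity $\tr u_\Phi=|\H|^2-K$ immediately give $|\xi|^2=|\H|^2-K$ without any coordinate work. This buys you a uniform, coordinate-free treatment of all three cases; the paper's route is more computational but has the side benefit of producing the explicit formula $\xi=\sqrt{\mu^2+\nu^2}\,N_1\pm\sqrt{\mu'^2+\nu'^2}\,N_2$, which it reuses later. One small point: in case~3 your check is phrased as ``plugging in $\xi=\tilde N_2$ yields $A_1$''; to close the loop you should also note that $\xi=c\,\tilde N_2$ gives $u_\Phi(\tilde N_1)=-c^2\tilde N_2$, so matching $A_1$ forces $c^2=1$, which pins down $\xi=\pm\tilde N_2$ rather than just its direction.
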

\begin{proof}
From Lemma \ref{lemma equation ellipse}, the curvature ellipse is the segment $[\H-\xi,\H+\xi]$ with
$$\xi=\sqrt{\mu^2+\nu^2}N_1\pm\sqrt{\mu'^2+\nu'^2}N_2,$$
where the sign $\pm$ is the sign of $\nu\nu'+\mu\mu'.$  By a direct computation (recalling (\ref{matrix u_Phi})),
$$u_{\Phi}(\xi)=-\left((\nu'\nu+\mu\mu')\pm\sqrt{\nu^2+\mu^2}\sqrt{\mu'^2+\nu'^2}\right)\xi.$$
Using the formulas (\ref{formula 1 mu nu}) and (\ref{formula 2 mu nu}) with $K_N=0,$ we get
\begin{equation}\label{expr Phi 1}
|\xi|^2=|\H|^2-K\mbox{ and }u_{\Phi}(\xi)=(|\H|^2-K)\xi,
\end{equation}
and thus the proposition in the cases where $|\H|^2-K\neq 0.$

If now $|\H|^2-K=0,$ then $\xi$ is lightlike. If the matrix of $u_{\Phi}$ in $(\tilde N_1,\tilde N_2)$ is $A_1,$ we get using (\ref{Phi segment})
$$\Phi(\tilde N_1)=\langle\xi,\tilde N_1\rangle^2=1\mbox{ and }\Phi(\tilde N_2)=\langle\xi,\tilde N_2\rangle^2=0.$$
Thus $\xi=\pm\tilde N_2.$
\end{proof}
\begin{remark}
If $\Phi=0$ the ellipse degenerates to the point $\H.$
\end{remark}
We finally summarize the results of Propositions \ref{lemma no degenerated ellipse} and \ref{lemma degenerated ellipse}, and describe in each case the corresponding classes of quadratic maps (using Theorem \ref{theorem algebraic classification}). We set
\begin{eqnarray*}
\tilde{q}:\R^{1,1}&\rightarrow &\mbox{End}_{\mbox{Sym}}(\R^2)\\
\nu&\mapsto & S_{\nu}.
\end{eqnarray*}
\begin{theorem} \label{theorem ellipse classification}
Let $r:=rank(\tilde{q})$ be the rank of $q.$ We distinguish three main cases, according to the value of $r:$
\\
\\1- $r=0$ (\emph{umbilicity}): the curvature ellipse reduces to the point $O.$ In that case the class $[q]$ is $[0].$ 
\\
\\2- $r=1$ (\emph{inflection}): $K_N=0,\ \Delta=0.$ The curvature ellipse is a segment centered at $\H$ which belongs to the line $\R.\H$. The segment is spacelike, timelike or lightlike if $|\H|^2-K>0,$ $|\H|^2-K<0,$ or $|\H|^2-K=0.$ 
\\There are two cases:
\\a- $|\H|^2-K\neq 0.$ The class $[q]$ is then determined by the values of $|\H|^2$ and $K$ (up to the action of $G$); 
\\b- $|\H|^2-K= 0.$ Then $|\H|^2=0,\ K=0,$ and the class $[q]$ is determined by the value of $\zeta$ (up to the action of $G'$).
\\
\\3- $r=2.$ There are two cases:
\\a- $\ K_N\neq 0$ (\emph{regularity}) The curvature ellipse is an ellipse centered at $\H.$ The class $[q]$ is then determined by the values of $|\H|^2,\ K,\ K_N$ and $\Delta$ if $\H\neq\vec{0}$ (up to $\pm id_{\R^{1,1}}$); if $\H=\vec{0},$ $[q]$ is determined by the values of  $K$ and $K_N$ (in that case $\Delta=-\frac{1}{4}K_N^2).$
\\b- $\H\neq \vec{0},\ K_N=0, \Delta\neq 0$ (\emph{semi-umbilicity}). The curvature ellipse is a segment centered at $\H,$ which does not belong to the line $\R\H.$ The class $[q]$ is determined by the values of $|\H|^2,\ K$ and $\Delta$ (up to $G$).
\end{theorem}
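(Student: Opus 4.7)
The plan is to organize the proof around the three possible values of $r=\mathrm{rank}(\tilde q)$, first translating each rank condition into a condition on the invariants $K_N$ and $\Delta$, and then, in each case, applying the geometric description of the curvature ellipse furnished by Propositions \ref{lemma no degenerated ellipse} and \ref{lemma degenerated ellipse} together with the algebraic classification of Theorem \ref{theorem algebraic classification}.

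First I would establish the dictionary: $r=0$ is equivalent to $q=0$; $r=2$ is equivalent to $K_N\neq 0$ or $\Delta\neq 0$; and $r=1$ is equivalent to $K_N=0$, $\Delta=0$ and $q\neq 0$. The case $r=0$ is trivial. If $K_N\neq 0$ then $[S_{N_1},S_{N_2}]\neq 0$, so $S_{N_1}$ and $S_{N_2}$ are linearly independent in $\mathrm{End}_{\mathrm{Sym}}(\R^2)$ and $r=2$. In the complementary case $K_N=0$, the operators $S_{N_1}$ and $S_{N_2}$ commute, hence are simultaneously diagonalizable; applying the formulas of Lemma \ref{invariants function xyzuvw} in a common eigenbasis (where $z=w=0$) yields $\Delta=\tfrac14 (xv-uy)^2$, so $\Delta\neq 0$ is exactly the independence of $S_{N_1}$ and $S_{N_2}$, i.e.\ $r=2$.

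With this dictionary in hand, case 1 is immediate. For case 3, subcase 3a ($K_N\neq 0$) combines Proposition \ref{lemma no degenerated ellipse} (the curvature ellipse is non-degenerate and centered at $\H$) with part 1 of Theorem \ref{theorem algebraic classification}; when $\H=\vec 0$ a short calculation (using $h_1=h_2=0$ and the Lagrange identity in the variables $x,y,z,u,v,w$) gives $\Delta=-\tfrac14 K_N^2$, so $\Delta$ is redundant and the class is fixed by $K$ and $K_N$ alone. Subcase 3b ($K_N=0,\ \Delta\neq 0$) uses Proposition \ref{lemma degenerated ellipse}: the ellipse is a segment $[\H-\xi,\H+\xi]$ with $|\xi|^2=|\H|^2-K$; the same identity $\Delta=-\tfrac14 K_N^2$ when $\H=\vec 0$ now \emph{forces} $\H\neq\vec 0$, and the class is determined by $(|\H|^2,K,\Delta)$ via either part 1 of Theorem \ref{theorem algebraic classification} (if $|\H|^2-K\neq 0$) or part 2a (if $|\H|^2-K=0$, noting that $K$ then recovers $|\H|^2$).

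Case 2 ($r=1$) requires the most care. From the proportionality of $q^1$ and $q^2$ one checks that the pair $(X(\theta),Y(\theta))$ in the parametrization (\ref{param ellipse theta}) traces a segment on a line through the origin, so the ellipse (a genuine segment via Proposition \ref{lemma degenerated ellipse}, or a point when $\Phi=0$) lies on such a line, which coincides with $\R.\H$ when $\H\neq\vec 0$. The causal type of the segment is read off from the sign of $|\H|^2-K=|\xi|^2$, giving the trichotomy spacelike/timelike/lightlike. Subcase 2a ($|\H|^2-K\neq 0$) then falls under part 1 of Theorem \ref{theorem algebraic classification}, while subcase 2b ($|\H|^2-K=0$) corresponds to part 2b of Theorem \ref{theorem algebraic classification} (or to part 3 when $\Phi=0$), so the class is captured by the invariant $\zeta$. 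The main obstacle is precisely this case~2 bookkeeping: verifying the alignment of the segment with $\R.\H$ (in particular handling the degenerate situations $\H=\vec 0$ and $\Phi=0$), and matching the geometric subcases to the correct algebraic subcases of Theorem \ref{theorem algebraic classification}; the rest is a straightforward assembly of prior results.
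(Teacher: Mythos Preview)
Your proposal is correct and follows essentially the same route as the paper: organize by $r$, read off the geometry of the ellipse from Propositions \ref{lemma no degenerated ellipse} and \ref{lemma degenerated ellipse}, and feed the result into the appropriate subcase of Theorem \ref{theorem algebraic classification}. The only structural difference is that the paper isolates the two facts you derive inline (the rank--$\Delta$ dictionary in the $K_N=0$ case, and the alignment of the degenerate segment with $\R.\H$) into a separate Lemma \ref{lemma prop segment}, proved via the formula $\Delta=[\H,\xi]^2$ rather than via your simultaneous diagonalization; and for the relation $\Delta=-\tfrac14 K_N^2$ when $\H=\vec 0$ the paper simply cites formulas (\ref{formula Delta a})--(\ref{formula Delta b}) instead of recomputing.
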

We will need the following lemma for the proof:
\begin{lemma} \label{lemma prop segment}We suppose that the curvature ellipse degenerates to the segment $[\H-\xi,\H+\xi],$ with $\xi\in\R^{1,1}.$ Then
\begin{equation}\label{equation Delta H}
\Delta=\left[\H,\xi\right]^2.
\end{equation}
Moreover $\Delta=0$ if and only if $\mbox{rank}\ \tilde q=1.$ This equivalently means that the ellipse belongs to the line $\R.\H.$ 
\end{lemma}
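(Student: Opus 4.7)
The plan is to derive $\Delta = [\H,\xi]^2$ by specializing the identity (\ref{Delta function xi}) from the proof of Lemma \ref{case Phi diag}, and then to translate the vanishing of $\Delta$ into the geometric statement about $\tilde q$ and the line $\R.\H$.

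For the first claim: in the degenerate case $K_N = 0$, the semi-length vector $\xi$ given by Proposition \ref{lemma degenerated ellipse} has exactly the form $\xi = \sqrt{\mu^2+\nu^2}\,N_1 \pm \sqrt{\mu'^2+\nu'^2}\,N_2$ used to derive (\ref{Delta function xi}), which can be rewritten as
$$\Delta = \tfrac{1}{4}(K - |\H|^2 + |\xi|^2)(K + |\H|^2 - |\xi|^2) + [\H,\xi]^2.$$
So it suffices to verify $|\xi|^2 = |\H|^2 - K$: this is (\ref{expr Phi 1}) in the spacelike and timelike sub-cases, and in the lightlike sub-case it is the tautology $0 = 0$, since $\xi$ is null and $|\H|^2 = K$ by hypothesis of that sub-case. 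The first factor on the right then vanishes, leaving $\Delta = [\H,\xi]^2$.

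For the equivalence, I would use Lemma \ref{invariants function xyzuvw}: writing $q = q_1 N_1 + q_2 N_2$, the operators $S_{N_1}, S_{N_2}$ are (up to sign) those of $q_2, q_1$, so $\mathrm{rank}\,\tilde q$ equals the dimension of the span of the real quadratic forms $q_1, q_2$. Hence $\mathrm{rank}\,\tilde q = 1$ is equivalent to $q_1, q_2$ being nontrivially proportional, i.e.\ to $\mathrm{Im}(q)$ being contained in a line $D$ through the origin of $\R^{1,1}$. Since the curvature ellipse lies in $\mathrm{Im}(q) \subset D$ and $\H$, its center, lies in $D$ as well, we get $D = \R.\H$ whenever $\H \neq 0$, so that the segment lies in $\R.\H$ precisely when $\xi$ is colinear with $\H$, i.e.\ when $[\H,\xi] = 0$. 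Combined with $\Delta = [\H,\xi]^2$, this yields the announced equivalences. The only point requiring care is the lightlike sub-case, where $\xi$ is null and $|\xi|^2 = |\H|^2 - K$ is a degenerate $0 = 0$; here one must explicitly invoke the hypothesis $|\H|^2 = K$ of that sub-case to make the reduction of (\ref{Delta function xi}) go through.
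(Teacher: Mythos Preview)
Your argument for the identity $\Delta=[\H,\xi]^2$ is exactly the paper's: invoke (\ref{Delta function xi}) and reduce via $|\xi|^2=|\H|^2-K$, which is (\ref{expr Phi 1}). Your extra care in the lightlike sub-case is warranted but the paper's reference to (\ref{expr Phi 1}) already covers it, since that identity was derived for any $\xi=\sqrt{\mu^2+\nu^2}\,N_1\pm\sqrt{\mu'^2+\nu'^2}\,N_2$ under $K_N=0$.

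For the equivalence $\Delta=0\Leftrightarrow\mathrm{rank}\,\tilde q=1$ you take a genuinely different route. The paper exploits $K_N=0$ to simultaneously diagonalize the commuting family $\{S_\nu\}$, obtaining $Q(\nu)=\lambda_1(\nu)\lambda_2(\nu)$ with $\lambda_1,\lambda_2\in{\R^{1,1}}^*$; then $\Delta=\det u_Q$ vanishes iff $\lambda_1,\lambda_2$ are dependent, which is exactly $\mathrm{rank}\,\tilde q\leq1$. Your approach instead reads off $\mathrm{rank}\,\tilde q=\dim\mathrm{span}(q_1,q_2)$ directly and passes through the geometric chain $\mathrm{rank}\,\tilde q=1\Leftrightarrow\mathrm{Im}(q)\subset D\Leftrightarrow$ segment $\subset\R.\H\Leftrightarrow[\H,\xi]=0\Leftrightarrow\Delta=0$. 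This is more elementary (no diagonalization needed) and has the virtue of delivering all three equivalences of the lemma in one pass, whereas the paper first proves $\Delta=0\Leftrightarrow\mathrm{rank}\,\tilde q=1$ algebraically and only then deduces the geometric statement from (\ref{equation Delta H}).

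One small point to tighten: as written, your chain only argues the forward implication ``$\mathrm{Im}(q)\subset D\Rightarrow$ segment $\subset D$''. For the converse you should note explicitly that $\mathrm{Im}(q)=\{t\,q(u):t\geq0,\ |u|=1\}$ is the cone over the segment, so that segment $\subset\R.\H$ forces $\mathrm{Im}(q)\subset\R.\H$ and hence $\mathrm{rank}\,\tilde q\leq1$. This is immediate, but without it the implication $\Delta=0\Rightarrow\mathrm{rank}\,\tilde q=1$ is not closed.
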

\begin{proof}
According to (\ref{expr Phi 1}), $|\xi|^2= |\H|^2-K.$ Formula (\ref{Delta function xi}) thus implies (\ref{equation Delta H}). 

Since $K_N=0,$ the morphisms $S_{\nu},\ \nu\in\R^{1,1}$ commute; thus there exists a basis of $\R^2$ in which their matrices are diagonal with diagonal entries $\lambda_1(\nu),\lambda_2(\nu),$ with $\lambda_1,\lambda_2\in{\R^{1,1}}^*.$ We have $Q(\nu)=\lambda_1(\nu)\lambda_2(\nu),$ and we thus see that $Q$ is not degenerated if and only if $\lambda_1$and $\lambda_2$ are independent. Thus $\Delta=0$ if and only $\mbox{rank}\ \tilde q=1.$ The last claim follows from (\ref{equation Delta H}).
\end{proof}
\textit{Proof of the Theorem \ref{theorem ellipse classification}.}
If $r=0$ then $q=0$ and the result is obvious.

If $r=1,$  the morphisms $S_{\nu},\ \nu\in\R^{1,1}$ commute; this implies that $K_N=0.$ The results then follow from Proposition \ref{lemma degenerated ellipse}, Lemma \ref{lemma prop segment} and Theorem \ref{theorem algebraic classification}.

We now suppose that $r=2.$ If $K_N\neq 0$ the results follow from Lemma \ref{lemma no degenerated ellipse} and Theorem \ref{theorem algebraic classification} (for the relation between $\Delta$ and $K_N$ when $\H=\vec{0},$ see (\ref{formula Delta a}) and (\ref{formula Delta b})). If $K_N=0,$  then $\Delta\neq 0,$ $\H\neq 0$ and the segment does not belong to the line $\R.\H$ (by Lemma \ref{lemma prop segment}); we then conclude with Theorem \ref{theorem algebraic classification}.\begin{flushright}${\Box}$\end{flushright}

\section{Principal configurations on spacelike surfaces}\label{section principal configurations}
Let us apply results proved in the previous sections to spacelike surfaces immersed in $\mathbb R^{3,1}$. We suppose that $\R^{3,1}$ is oriented by its canonical basis, and that it is \textit{time-oriented} as follows: we will say that a vector of $\R^{3,1}$ is future-directed if its last component in the canonical basis is positive. Let us consider a smooth oriented spacelike surface $M$ immersed in $\mathbb R^{3,1}.$ Note that each normal plane of $M$ is then naturally oriented and time-oriented. See \cite{Oneill} as a general reference.

The second fundamental form at each point $p$ of $M$ is a quadratic map
$$II_p:T_pM \rightarrow N_pM.$$
Observe that if we choose orthonormal basis of $T_pM$ and of $N_pM$ which are positively oriented (the second vector of the basis of $N_pM$ being moreover timelike and future-directed), $T_pM$ identifies with $\R^2$ and $N_pM$ with $\R^{1,1}$ (with their canonical orientations). Therefore, the forms, the invariants and the ellipse of curvature studied in the previous sections appear in this setting.

For $p\in M$ and $\nu \in N_pM,\ \nu\neq 0,$ the {\it second fundamental form with respect to $\nu$} at $p$ is the quadratic form  $II_{\nu}:T_pM \rightarrow \R$ defined by 
$$II_{\nu}(X)= \langle II_p(X,X),\nu\rangle. $$
Let $\bar{\nu}$ be a local extension to $\mathbb R^{3,1}$ of the normal vector $\nu $ of $M$ at $p$. The {\it shape operator} $S_{\nu }:T_pM\rightarrow T_pM$ defined by 
$$S_{\nu }(X)=-(\bar{D}_{\bar{X}}\bar{\nu})^{\top }, $$
is the self-adjoint operator associated to the quadratic form $II_{\nu}:$ for all $X,Y\in T_pM,$
$$\left\langle S_{\nu }(X),Y\right\rangle =\left\langle II_p\left(X,Y\right),\nu \right\rangle.$$
We can find, for each $p \in M,$ an orthonormal basis of eigenvectors of $S_{\nu}$ in $T_pM$, for which the restriction of the second fundamental form to the unitary tangent vectors $II_{\nu}|_{\mathbb S^1}$, takes its maximal and minimal values. The corresponding eigenvalues $\lambda_1$, $\lambda_2$ are the {\it $\nu$-principal curvatures}. A point $p$ is said to be $\nu$-umbilic if both $\nu$-principal curvatures coincide at $p.$ 

Let ${\bf U}_{\nu}$ be the set of $\nu$-umbilics in $M$. For any $p \in M \setminus {\bf U}_{\nu}$, there are two $\nu$-principal directions defined by the eigenvectors of $S_{\nu}$. These fields of directions are smooth and integrable, and they define two families of orthogonal curves, their integrals, which are the $\nu$-principal lines of curvature. The two orthogonal foliations with the $\nu$-umbilics as their singularities form the $\nu$-principal configuration of $M$.   

The differential equation of the $\nu$-principal lines of curvature is given by
\begin{eqnarray} \label{equation nu lines}
S_{\nu}(X(p))= \lambda(p) X(p),
\end{eqnarray}
where $\lambda(p) \in \R$. Now, let us obtain the expression of this differential equation in a coordinate chart. Let $\phi $ be a parameterization of an open neighborhood $U\subset M$ with local coordinates $(u,v)$, and let $\nu$ be a normal vector field along $U$. For each $p=\phi(u,v)\in M$, the associated basis of $T_{p}M$ is $(\phi _{u}=\frac{\partial\phi}{\partial u},\phi _{v}=\frac{\partial\phi }{\partial v}).$ The coefficients of the second fundamental form with respect to $\nu$ are 
\begin{eqnarray*}
e_{\nu} & = & II_{\nu}\left( \phi_u \right) = <II(\phi_u,\phi_u), \nu>,\ f_{\nu} =<II(\phi_u,\phi_v),\nu >,\\
g_{\nu} & = & II_{\nu}\left( \phi_v \right) = <II(\phi_v,\phi_v), \nu> . 
\end{eqnarray*}

A standard procedure for elimination of the parameter $\lambda$ in (\ref{equation nu lines}) provides the expression of the equation of the $\nu$-principal lines of curvature in this coordinate chart:
\begin{equation}\label{difeq}
(Fe_{\nu}-Ef_{\nu})du^2+(Ge_{\nu}-Eg_{\nu})dudv+(Gf_{\nu}-Fg_{\nu})dv^2=0,
\end{equation}
\noindent where $E, F$ and $G$ are the coefficients of the first fundamental form in this coordinate chart. 

The study of the $\nu$-principal configurations on surfaces immersed in $\mathbb R^4$ from the dynamical view point has been developed in \cite{Ra-SB} and \cite{Ro-SB}. In the present work the causal character of the normal vector field  will play an important role. A spacelike surface immersed in $\mathbb R^{3,1}$ has a well defined lightcone Gauss map whose associated shape operator is known as the normalized shape operator, and the corresponding principal configuration is known as the lightcone principal configuration; see \cite{Izumiya-Romero2007} and \cite{IzumiyaNuñoRomero2004}. 

Observe that the lightcone principal configuration is a particular case of $\nu$-principal configuration. Indeed, the lightcone Gauss map is defined as follows. Any timelike normal vector field $n^t$ determines, by means of the cross product of this vector field with the standard tangent frame of the immersion, a unique oriented spacelike normal vector field $n^s$. It turns out that $n^t + n^s$ is a lightlike normal vector field. Moreover, the direction determined by this vector field is independent of the initial vector field $n^t$. Namely, if  $\{ \bar n^t, \bar n^s\}$ is another normal frame determined  by $\bar n^t$ as above, the vector fields $\bar n^t + \bar n^s$ and $n^t + n^s$ are parallel (Lemma 3.1, \cite{Izumiya-Romero2007}). The  lightcone principal configuration is defined to be the $(n^t + n^s)$-principal configuration. 

In the case of a surface immersed in the hyperbolic space, $\phi:M \rightarrow H_{+}^3(-1),$ for the timelike normal vector field at $\phi(p)$ we may choose the position vector $\vect{o\phi(p)},$ where $o$ is the origin of $\R^{3,1};$ the vector $n^s$ is then univocally defined on the de Sitter space $S^3_1$.  The lightcone Gauss map coincides with the hyperbolic Gauss map and the corresponding shape operator is the horospherical shape operator whose principal directions define the horospherical principal configuration \cite{Izumiya2003}. This configuration coincides with the principal configuration of the surface immersed in $H_+^3(-1),$ since such a surface is semi-umbilic in $\R^{3,1}.$

\subsection{$\nu$-principal lines at the neighborhood of a lightlike umbilical point}\label{normal form}
Let us analyze the $\nu$-principal configuration near an isolated $\nu$-umbilic. If the vector field $\nu$ is spacelike or timelike the treatment is analogous to that of an isolated $\nu$-umbilic of a surface immersed in $\mathbb R^4$ \cite{Ra-SB}. Therefore, we consider the case of an isolated $\nu$-umbilic when the normal vector $\nu(p)$ is a lightlike vector. 

We fix a basis $(e_1,e_2,N_1,N_2)$ of $\R^{3,1}$ adapted to the splitting $\R^{3,1}=T_pM\oplus N_pM$ at the $\nu$-umbilic $p:$ $(e_1,e_2)$ is a positively oriented and orthonormal basis of $T_pM$ and $(N_1,N_2)$ is a positively oriented basis of null and future-directed vectors of $N_pM$ such that $\langle N_1,N_2\rangle=-1.$ In the basis $(e_1,e_2,N_1,N_2),$ we suppose that the immersion $\phi$ is of the form
$$\phi(u,v)=(u,v,A(u,v),B(u,v))$$
with
$$A(u,v)=\frac{1}{2}(a_{20}u^2+2a_{11}uv+a_{02}v^2)+\frac{1}{6}(a_{30}u^3+3a_{21}u^2v+3a_{12}uv^2+a_{03}v^3)+o(3)
$$
and
$$B(u,v)=\frac{k}{2}(u^2+v^2)+\frac{1}{6}(b_{30}u^3+3b_{21}u^2v+3b_{12}uv^2+b_{03}v^3)+o(3).$$
This means that, at $p=(0,0),$
$$II=\left(\begin{array}{cc}a_{20}&a_{11}\\a_{11}&a_{02}\end{array}\right)N_1+\left(\begin{array}{cc}k&0\\0&k\end{array}\right)N_2,$$
or equivalently that $II_{N_1}=-k\langle.,.\rangle.$ Thus, $(0,0)$ is a $N_1-$umbilic of the surface. We have 
\begin{eqnarray*}
\phi_u & = & (1,0,a_{20}u+a_{11}v+\frac{1}{2}(a_{30}u^2+2a_{21}uv+a_{12}v^2)+o(2),\\
&  & ku + \frac{1}{2}(b_{30}u^2+2b_{21}uv+b_{12}v^2) + o(2)), \\ 
\phi_v & = &(0,1,a_{11}u+a_{02}v+\frac{1}{2}(a_{21}u^2+2a_{12}uv+a_{03}v^2)+ o(2),\\
& & kv+\frac{1}{2}(b_{21}u^2+2b_{12}uv+b_{03}v^2) + o(2)), \\ 
\phi_{uu} & = & (0,0,a_{20}+a_{30}u+a_{21}v+ o(1), 
k+b_{30}u+b_{21}v+ o(1)),\\
\phi_{uv} & = & (0,0,a_{11}+a_{21}u+a_{12}v+ o(1), 
b_{21}u+b_{12}v+ o(1))
\end{eqnarray*}
and
\begin{eqnarray*}
\phi_{vv}&=&(0,0,a_{02}+a_{12}u+a_{03}v+ o(1),
k+b_{12}u+b_{03}v+ o(1)).
\end{eqnarray*}
Let
$$\nu=\nu_1e_1+\nu_2e_2+\nu_3N_1+\nu_4N_2$$
be a normal field such that $\nu_1(0)=\nu_2(0)=\nu_4(0)=0$ and $\nu_3(0)=1.$ Define, for $i=1,2,3,4,$ the numbers $l_i,m_i,n_i$ 
such that
$$\nu_i=l_i+m_iu+n_iv+ o(1).$$
We readily have
$l_1=l_2=l_4=0$ and $l_3=1.$ Since
\begin{eqnarray*}
\langle \phi_u,\nu\rangle 
&=&(m_1-k)u+n_1v+ o(1)
\end{eqnarray*}
and
\begin{eqnarray*}
\langle \phi_v,\nu\rangle 
&=&m_2u+(n_2-k)v+ o(1),
\end{eqnarray*}
we get
$$m_1=k,\ n_1=0,\ m_2=0\mbox{ and }\ n_2=k.$$
By scaling $\nu,$ we may suppose without loss of generality that $\nu_3$ is a constant equal to 1. Thus $l_3=1$ and $m_3=n_3=0$. We denote $m_4,n_4$ by $m,n$ respectively. We thus get 
\begin{equation}\label{DLnu}
\nu_1=ku+ o(1),\ \nu_2=kv+ o(1),\ \nu_3=1,\mbox{ and }\nu_4=mu+nu+ o(1).
\end{equation}
Thus
\begin{eqnarray*}
e_{\nu}=\langle \phi_{uu},\nu\rangle&=&-(k+(b_{30}+ma_{20})u+(b_{21}+na_{20})v+ o(1)),
\end{eqnarray*}
\begin{eqnarray*}
f_{\nu}=\langle \phi_{uv},\nu\rangle&=&-((b_{21}+ma_{11})u+(b_{12}+na_{11})v+ o(1)),
\end{eqnarray*}
\begin{eqnarray*}
g_{\nu}=\langle \phi_{vv},\nu\rangle&=&-(k+(b_{12}+ma_{02})u+(b_{03}+na_{02})v+ o(1)).
\end{eqnarray*}
Since
$$E=\langle \phi_u,\phi_u\rangle=1 + o(1),\ F=\langle \phi_u,\phi_v\rangle= o(1)$$
and 
$$G=\langle \phi_v,\phi_v\rangle=1 + o(1),$$
we get
\begin{eqnarray*}
Fe_{\nu}-Ef_{\nu}&=&(b_{21}+ma_{11})u+(b_{12}+na_{11})v + o(1),
\end{eqnarray*}
\begin{eqnarray*}
Ge_{\nu}-Eg_{\nu}&=&(b_{12}-b_{30}+m(a_{02}-a_{20}))u+(b_{03}-b_{21}+n(a_{02}-a_{20}))v+o(1)
\end{eqnarray*}
and
\begin{eqnarray*}
Gf_{\nu}-Fg_{\nu}&=&-(b_{21}+ma_{11})u-(b_{12}+na_{11})v + o(1).
\end{eqnarray*}
We moreover suppose that the normal field is lightlike on a neighborhood of the point $p;$ this condition reads $\nu_1^2+\nu_2^2-2\nu_3\nu_4\equiv 0,$ which implies that $m=n=0.$ The expressions above then yield
\begin{eqnarray*}
Fe_{\nu}-Ef_{\nu}&=&b_{21}u+b_{12}v+ o(1),
\end{eqnarray*}
\begin{eqnarray*}
Ge_{\nu}-Eg_{\nu}&=&(b_{12}-b_{30})u+(b_{03}-b_{21})v+ o(1)
\end{eqnarray*}
and
\begin{eqnarray*}
Gf_{\nu}-Fg_{\nu}&=&-(b_{21}u+b_{12}v+ o(1)),
\end{eqnarray*}
and we thus obtain the equation of the lightcone principal curvature lines.

\begin{theorem}
For a generic spacelike embedding of a surface in $\R^{3,1},$ the differential equation of the lightcone principal curvature lines with an isolated umbilic has the following normal form:
\begin{eqnarray*}
&& (b_{21}u+b_{12}v+ o(1))du^2
+((b_{12}-b_{30})u+(b_{03}-b_{21})v+ o(1))dudv \\
&&-(b_{21}u+b_{12}v+ o(1))dv^2+o(1)=0.
\end{eqnarray*}
where $b_{ij}=\frac{\partial^{i+j} B}{\partial^iu \partial^jv}(0,0).$ 

The singular point is Darbouxian of type $D_1,\ D_2,\ D_3$  with index $\frac{1}{2}$ in case $D_1$ and $D_2$, and $-\frac{1}{2}$ in case $D_3$. 
\end{theorem}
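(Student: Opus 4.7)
The first assertion, giving the normal form of the lightcone principal curvature line equation, is essentially already established by the preparatory computations in the text: once $Fe_\nu - Ef_\nu$, $Ge_\nu - Eg_\nu$, and $Gf_\nu - Fg_\nu$ have been expanded under the lightlike constraint $\nu_1^2+\nu_2^2-2\nu_3\nu_4\equiv 0$ (which forced $m=n=0$), substituting them into (\ref{difeq}) yields the displayed normal form. The only remaining content of the theorem is the Darbouxian classification of the isolated umbilic at the origin.

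The plan for this is to apply the classical Lie-Cartan projective blow-up to the binary form $\omega = A\,du^2 + B\,du\,dv - A\,dv^2$, where $A(u,v) = b_{21}u+b_{12}v$ and $B(u,v) = (b_{12}-b_{30})u+(b_{03}-b_{21})v$. Setting $p = dv/du$, the equation defines the algebraic surface
\[
\Sigma:\ F(u,v,p) := A(u,v)(1-p^2) + B(u,v)\,p = 0
\]
in $(u,v,p)$-space. Since $A$ and $B$ vanish at the origin, the whole exceptional fibre $\{0\}\times \R\mathrm{P}^1$ is contained in $\Sigma$, and the Lie-Cartan lifted field
\[
\chi \;=\; F_p(\partial_u + p\,\partial_v) \;-\; (F_u + p\,F_v)\,\partial_p
\]
restricts there to $-C(p)\,\partial_p$, where
\[
C(p) \;=\; b_{21} + (2b_{12}-b_{30})\,p + (b_{03}-2b_{21})\,p^2 - b_{12}\,p^3.
\]
The genericity assumption amounts to $C$ having only simple real roots; there are thus either one or three such roots, and these are exactly the equilibria of $\chi$ on the exceptional fibre.

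The main obstacle, and the heart of the proof, is the linearization of $\chi$ at each equilibrium $(0,0,p_0)$ on $\Sigma$: one must verify hyperbolicity (with one eigenvalue equal to $-C'(p_0)$), classify each equilibrium as a node or a saddle via the sign of the product of its two eigenvalues on $\Sigma$, and then project the local phase portraits back to the $(u,v)$-plane. The three Darbouxian configurations $D_1,D_2,D_3$ correspond to the standard combinatorial cases (one real root giving one node; three real roots giving one saddle with two nodes; three real roots giving two saddles with one node), and the Poincar\'e-Hopf index of the original line field at the umbilic is recovered by the formula $(n-s)/2$ in terms of the numbers $n$ of nodes and $s$ of saddles on the exceptional fibre, producing $+1/2$ in cases $D_1,D_2$ and $-1/2$ in case $D_3$. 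The entire analysis parallels verbatim the classical blow-up argument for principal lines on surfaces in $\R^3$ (Darboux; Gutierrez-Sotomayor) and the $\nu$-principal analysis in $\R^4$ of \cite{Ra-SB}; the sign change $-A$ (rather than $+A$) in front of $dv^2$, characteristic of the lightlike setting, plays no role in the local blow-up analysis.
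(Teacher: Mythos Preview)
Your proposal is correct and matches the paper's treatment: the normal form follows from the computations preceding the theorem, and for the Darbouxian classification the paper offers no argument beyond the sentence ``We refer to \cite{Bruce} and \cite{Gut} for the description of Darbouxian types.'' Your Lie--Cartan blow-up sketch is precisely the content of those references (and of \cite{Ra-SB} for the $\nu$-principal case), so you have in fact made explicit what the paper leaves to citation; one minor caution is that the exact node/saddle bookkeeping and the index formula on the exceptional fibre require some care with conventions, but the method and the conclusions are the standard ones.
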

We refer to \cite{Bruce} and \cite{Gut} for the description of Darbouxian types.

\subsection{Lightcone principal configurations on compact surfaces}\label{global results}

Let $M$ be a compact orientable surface and 
$$I=\{\phi:M \rightarrow \mathbb R^{3,1}|\ \phi \ {\rm is\ a\ smooth\ spacelike\ immersion}\}.$$ 
Endow this space with its standard topology. Consider the lightcone principal configuration of $\phi(M)$ described above.

\begin{theorem}\label{theorem Darbouxian}
The set of immersions $\phi \in I$ whose lightcone umbilic points are Darbouxian is an open and dense subset of $I$. 
\end{theorem}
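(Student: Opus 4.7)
The approach is the classical combination of openness and Thom jet transversality applied to the space of smooth spacelike immersions, closely parallel to the Euclidean Gutierrez--Sotomayor argument and the $\R^4$-version in \cite{Ra-SB}.

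\textbf{Openness.} Fix $\phi_0 \in I$ whose lightcone umbilic points are all Darbouxian. By compactness of $M$ and the fact that Darbouxian umbilics are isolated, there are only finitely many such points $p_1,\ldots,p_N$. Near each $p_i$, the type $D_1,D_2,D_3$ is determined by strict inequalities (nonvanishing plus sign of a discriminant) in the 3-jet coefficients $b_{ij}$ appearing in the normal form derived just above the theorem. The implicit function theorem applied to the system $\{e_\nu = g_\nu,\ f_\nu = 0\}$ which defines the lightcone umbilic locus shows that each $p_i$ varies smoothly with $\phi$ in the $C^3$ topology; the strict Darbouxian inequalities then persist under small perturbation. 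Since the Whitney $C^\infty$-topology on $I$ is finer than the $C^3$-topology, the property is $C^\infty$-open.

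\textbf{Density.} Work in the jet bundle $J^3(M,\R^{3,1})$ and define the bad set $\mathcal{B}$ as the locus of 3-jets at which the underlying immersion has a lightcone umbilic at the source point that is either non-isolated or fails to be Darbouxian. Using the explicit expressions of $e_\nu,f_\nu,g_\nu$ together with the expansion of $\nu = n^t+n^s$ as a smooth function of the 2-jet of $\phi$, the lightcone umbilic condition is cut out by two independent scalar equations and gives a codimension-2 submanifold of $J^2$. Within this stratum, non-Darbouxianity is the zero locus of the discriminant of the associated cubic form in $(du:dv)$ obtained from the normal form, hence an additional codimension-1 condition on the 3-jet; non-isolated umbilicity requires further vanishing of the linear parts of $e_\nu-g_\nu$ and $f_\nu$, which is strictly more degenerate. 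Thus $\mathcal{B}$ is a semi-algebraic set of codimension at least $3$ in $J^3(M,\R^{3,1})$, stratifiable into smooth strata of that codimension. Thom's jet transversality theorem then asserts that for a residual subset of $\phi \in I$ the 3-jet extension $j^3\phi : M \to J^3(M,\R^{3,1})$ is transverse to every such stratum. Since $\dim M = 2 < \operatorname{codim} \mathcal{B}$, transversality forces $j^3\phi(M) \cap \mathcal{B} = \emptyset$, so every lightcone umbilic of $\phi$ is isolated and Darbouxian. Combined with openness, this gives the open--dense conclusion.

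\textbf{Main obstacle.} The delicate technical point is the codimension estimate for $\mathcal{B}$, i.e.\ showing that on the two-codimensional umbilic stratum the Darbouxian discriminant really is a non-identically-zero polynomial in the $b_{ij}$. This amounts to exhibiting an explicit 3-jet that is non-Darbouxian and verifying that the discriminant is non-constant there; the computation is routine but needs to be carried out carefully using the normal form displayed above. An additional subtlety, absent in the $\R^3$ and $\R^4$ cases, is that the lightlike direction $n^t+n^s$ depends on the tangent plane through the lightcone Gauss map, so the umbilic stratum is cut out by a smooth function on a bundle whose fiber structure varies with the base point; one must verify smoothness of this function and the regularity of its differential to apply Thom's theorem to the stratification of $\mathcal{B}$. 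Once these algebraic and bundle-theoretic checks are in place, the transversality argument is standard.
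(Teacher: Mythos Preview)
The paper does not actually supply a proof of this theorem: after computing the normal form of the lightcone BDE at an isolated umbilic in Section~\ref{normal form} and observing that it coincides with the classical Darbouxian normal form, the authors simply state Theorem~\ref{theorem Darbouxian} and refer to \cite{Bruce} and \cite{Gut} for the relevant background. Your proposal is therefore not competing with an explicit argument in the paper; rather, it spells out the standard openness-plus-transversality mechanism that the paper is tacitly importing from those references.

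As such, your outline is correct and is exactly the argument the paper intends. The openness part is fine. For density, your codimension count (umbilic locus codimension~$2$, non-Darbouxian an extra codimension~$1$, hence total codimension~$\ge 3>2=\dim M$) is the right one, and your identification of the two technical points that require care---nontriviality of the Darbouxian discriminant on the umbilic stratum, and the dependence of the lightlike normal direction on the $2$-jet of $\phi$---is accurate. The second point is indeed the only place where the Minkowski situation differs from the $\R^3$ or $\R^4$ story, and the normal-form computation preceding the theorem (in particular the reduction to $m=n=0$ when $\nu$ is lightlike) is what makes the umbilic stratum and the coefficients $b_{ij}$ depend smoothly and polynomially on the jet of $\phi$, so that Thom transversality applies without further obstruction.
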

Thus, from this theorem we get, as a consequence of the Poincar\'e-Hopf Theorem applied to the lightcone principal configurations the following result \cite{Fel}, \cite{IzumiyaNuñoRomero2004} :

\begin{theorem}\label{number lightlike umbilic points}
The number of lightlike umbilic points of any closed (compact without boundary) spacelike surface generically immersed in $\mathbb R^{3,1}$ is greater than or equal to $2|\chi(M)|$, where $\chi(M)$ denotes the Euler number of $M$. Consequently any spacelike $2$-sphere generically immersed in $\mathbb R^{3,1}$has at least $4$ lightlike umbilic points.
\end{theorem}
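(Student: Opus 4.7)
\begin{proofp}
The strategy is to apply the Poincar\'e--Hopf index formula for line fields to the two orthogonal foliations of the lightcone principal configuration on $M$.

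First I would check that the lightcone principal configuration is globally well defined. Since $M$ is orientable and $\R^{3,1}$ is time-oriented, one can choose along $M$ a globally defined future-directed timelike unit normal $n^t$; the associated $n^s$ is then obtained via the cross product with an oriented tangent frame, so $n^t+n^s$ is a nowhere zero lightlike normal field. By the discussion preceding Section \ref{normal form}, its $\nu$-principal configuration depends only on the lightlike direction it determines. This yields two mutually orthogonal line fields on $M$, smooth away from the set $\mathbf{U}$ of lightlike umbilic points.

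Next I would invoke Theorem \ref{theorem Darbouxian} to reduce to the generic situation: after a small perturbation of the immersion one may assume that every point of $\mathbf{U}$ is a Darbouxian umbilic of type $D_1$, $D_2$ or $D_3$. Then the normal form computed in Section \ref{normal form} (together with the classical analysis of $D_1,D_2,D_3$ singularities recalled in \cite{Bruce,Gut}) shows that each such singularity contributes index $+\tfrac12$ (types $D_1,D_2$) or $-\tfrac12$ (type $D_3$) to each of the two principal foliations. In particular, the index of every Darbouxian lightlike umbilic with respect to either foliation is $\pm\tfrac12$, hence of absolute value exactly $\tfrac12$.

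Now I would apply the Poincar\'e--Hopf theorem for line fields: on a closed oriented surface $M$, for any line field with only isolated singularities the sum of the indices equals $\chi(M)$. Applied to either of the two principal foliations we obtain
\begin{equation*}
\chi(M)=\sum_{p\in\mathbf{U}}\mathrm{ind}_p,
\qquad \mathrm{ind}_p\in\{+\tfrac12,-\tfrac12\}.
\end{equation*}
Taking absolute values gives $|\chi(M)|\le \tfrac12\,\#\mathbf{U}$, i.e.\ $\#\mathbf{U}\ge 2|\chi(M)|$. Specializing to $M=S^2$ (where $\chi=2$) immediately yields at least $4$ lightlike umbilic points.

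The only nontrivial step is the identification of the Darbouxian indices with $\pm\tfrac12$: it requires checking that the normal form derived in Section \ref{normal form} falls in the generic class analysed by Bruce and Gutierrez--Sotomayor, whose local phase portraits (lemon, monstar, star) carry the classical indices $\tfrac12,\tfrac12,-\tfrac12$. Everything else is essentially a bookkeeping application of Poincar\'e--Hopf, provided the preliminary genericity and global-definition issues above are addressed.
\end{proofp}
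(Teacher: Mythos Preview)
Your proposal is correct and follows essentially the same approach as the paper: the paper simply states that the result is ``a consequence of the Poincar\'e--Hopf Theorem applied to the lightcone principal configurations'' using Theorem~\ref{theorem Darbouxian} and the index computation from the normal-form theorem in Section~\ref{normal form}. You have spelled out the details (global well-definedness of the lightcone configuration, the $\pm\tfrac12$ indices, and the triangle-inequality estimate) that the paper leaves implicit.
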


\begin{remark}
In \cite{IzumiyaNuñoRomero2004} (Theorem $7.4$ and Corollary $7.5$) the authors obtain results analogous to Theorems \ref{theorem Darbouxian} and \ref{number lightlike umbilic points} for the particular class of 
Horospherical configurations on surfaces immersed in $\mathbb H^3_+(-1)$. Nevertheless, they apply a different approach, that is, since any of these surfaces has a global parallel normal field, the position vector field of the immersion into $\mathbb H^3_+(-1)$, the horospherical principal configuration is diffeomorphically equivalent to the principal configuration of the image of some immersion of that surface into $\mathbb R^3$. Then they can apply those theorems that hold for surfaces immersed in $\mathbb R^3$ to the case under analysis. On the other hand, the local analysis on the $\nu$-principal configuration developed in the present article, restricted to the class of surfaces immersed in $\mathbb H^3_+(-1)$, provides different proofs of these results. 
 \end{remark}
We also have the following result analogous to Theorem $1.13$ in \cite{Little}.
\begin{theorem}\label{existence lightlike umbilic points}
Let $M$ be a compact orientable spacelike surface immersed in $\mathbb R^{3,1}$, with non-zero Euler characteristic. Then, $M$ has lightlike umbilic points. 
\end{theorem}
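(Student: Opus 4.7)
The plan is to use the globally defined lightcone Gauss map to build a canonical line field on $M$ whose singularities are lightlike umbilic points, and then appeal to the classical topological obstruction to nowhere-singular line fields on a closed surface. This parallels the well-known proof that a compact orientable surface in $\R^3$ with $\chi(M) \neq 0$ must have umbilics.

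First I would observe that, because $M$ is oriented and spacelike while $\R^{3,1}$ is time-oriented, the lightcone Gauss map $\nu := n^t + n^s$ recalled at the beginning of Section \ref{section principal configurations} is globally well-defined on $M$ up to a positive rescaling (which does not affect its eigendirections). The associated shape operator $S_\nu$ is therefore globally defined on all of $M$, and away from the set $U_\nu$ of $\nu$-umbilic points its two real eigendirections form a smooth unoriented line field $\ell \subset TM$. Since $\nu$ is lightlike, every point of $U_\nu$ is by definition a lightlike umbilic of $M$, so it is enough to prove $U_\nu \neq \emptyset$.

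I would then argue by contradiction: assume $U_\nu = \emptyset$, so that $\ell$ is a continuous line field defined on all of $M$. The classical topological fact is that a closed orientable surface admitting a nowhere-singular continuous line field must have Euler characteristic zero. One proves this by passing to the orientation double cover $\pi : \tilde M \to M$ of the line bundle $\ell$: on $\tilde M$ the pullback $\pi^*\ell$ becomes orientable, hence trivial as a line bundle on a surface, and therefore produces a nowhere-vanishing vector field on $\tilde M$, so $\chi(\tilde M) = 0$ by Poincar\'e-Hopf; since $\chi(\tilde M)$ equals either $\chi(M)$ or $2\chi(M)$ depending on whether the cover is trivial or connected, this forces $\chi(M) = 0$, contradicting the hypothesis.

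The only nontrivial verification is the continuous (in fact smooth) dependence of the eigendirections of $S_\nu$ on the base point away from $U_\nu$, which follows from standard spectral theory because the eigenvalues of $S_\nu$ are real and simple on $M \setminus U_\nu$. A noteworthy feature of this approach is that it applies to \emph{every} smooth spacelike immersion and does not rely on the Darbouxian genericity assumptions; an alternative argument would instead approximate the given immersion by generic ones, apply Theorem \ref{number lightlike umbilic points} to extract at least one lightlike umbilic from each approximating immersion, and pass to the limit using the closed condition (readable off Lemma \ref{Lemma tr det Phi} together with $\Phi \geq 0$) that $p$ is a lightlike umbilic if and only if $K_N(p) = 0$ and $|\H|^2(p) - K(p) = 0$.
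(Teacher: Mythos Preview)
Your argument is correct, and it is essentially the approach the paper has in mind. In the paper the theorem is stated without proof, as the analogue of Theorem~1.13 in \cite{Little}; the surrounding discussion (the globally defined lightcone Gauss map, the lightcone principal configuration, and the appeal to Poincar\'e--Hopf in Theorem~\ref{number lightlike umbilic points}) makes clear that the intended argument is exactly the one you give: the lightcone shape operator produces a global line field on $M$ with singular set $U_\nu$, and a closed orientable surface carrying a nowhere-singular line field has $\chi=0$. Your verification that the lightlike normal direction---and hence the eigendirection field of $S_\nu$---is globally well defined (using orientability of $M$ and time-orientation of $\R^{3,1}$) is the only point requiring care, and you handle it correctly.

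Your alternative route via approximation by generic immersions and passage to the limit is also valid; the closed characterisation of lightlike umbilicity as $K_N=0$ and $|\H|^2-K=0$ is correct (from $\Phi\geq 0$ one gets $\Phi(N_1)\Phi(N_2)\geq \tilde\Phi(N_1,N_2)^2$, so if both trace and determinant of $u_\Phi$ vanish then $\Phi$ kills one of the null directions $N_1,N_2$). This is a genuine alternative, but the direct line-field argument is cleaner and is what the paper's framework points to.
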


\section{Mean directionally curved lines and Asymptotic lines on spacelike surfaces in $\mathbb R^{3,1}$ and surfaces in $\mathbb R^4$}\label{section mean curvature lines}

If $p$ is a point on a surface $M$ immersed in $\mathbb R^4$, a mean directionally curvature direction in the tangent plane $T_pM$ is defined as the inverse image by the second fundamental form of the points in the ellipse of curvature where the line defined by the mean curvature vector intersects the ellipse. 

The field of directions, maybe with singularities, defined in this way is called the field of {\it mean 
directionally curvature directions}, and has been studied recently in \cite{Mello}. This definition naturally extends to the case of spacelike surfaces immersed in  $\mathbb R^{3,1}$. 

In the sequel the invariants and functions of Section \ref{section def invariants} that will be applied are considered evaluated at the point $p$ without any reference.   

Consider $II(u)=\vec{H}+II^{\circ}(u),$ where $u$ is a unit vector in $T_pM$ and $II^{\circ}$ is the traceless part of $II$. The condition that determines the mean curvature directions is
\begin{equation}\label{condition mean curvature directions} 
[\vec{H}, II^{\circ}(u)]=0,
\end{equation} 
where the brackets stand for the mixed product in $N_pM$ (the determinant in a positively oriented Lorentzian basis). We first express $II^\circ(u)$ in a basis adapted to the axis of the curvature ellipse. We use notation and results of Lemma  \ref{case Phi diag}. Choose first a unit vector $e_1$ of $T_{p}M$ such that $II^{\circ}(e_1)=a\tilde u_1.$

Since the map $II^{\circ}$ increases the angle two times, there is a vector $e_2$ orthogonal to $e_1$ satisfying the equation $II^{\circ}(e_2)= - a\tilde u_1$. Therefore,  
$$\frac{1}{2}II^{\circ}(e_1 + e_2)=II^{\circ}\left(\frac{\sqrt 2}{2}(e_1 + e_2)\right)= b \tilde u_2,$$  
where the last equality holds because  the unit vector $\frac{\sqrt 2}{2}(e_1 + e_2)$ makes an angle of $\frac{\pi}{4}$ with $e_1$. Thus, setting $u=x_1e_1+x_2e_2,$ we get 
\begin{eqnarray}\label{expression II circ}
II^{\circ}(u) & = & (x_1,x_2)\left(\begin{array}{cc} 
a & 0 \\
0 & -a
\end{array}\right)\left(\begin{array}{c}x_1\\x_2\end{array}\right) \tilde u_1+ (x_1,x_2) \left(\begin{array}{cc} 
0 & b \\
b & 0
\end{array}\right)\left(\begin{array}{c}x_1\\x_2\end{array}\right) \tilde u_2 \nonumber \\
& = & a (x_1^2 - x_2^2) \tilde u_1 + 2b x_1 x_2 \tilde u_2.
\end{eqnarray} 

With the expression above and recalling Lemma \ref{case Phi diag}, a straigthforward computation shows that equation (\ref{condition mean curvature directions}) is equivalent to
\begin{eqnarray}\label{equation mean curvature directions}
m(u):=-\beta a x_1^2 + 2 \alpha b x_1 x_2 + \beta a x_2^2=0.  
\end{eqnarray}
where $\alpha$ and $\beta$ are defined in Lemma \ref{case Phi diag}.

Let us consider now the field of { \it asymptotic directions} of a surface immersed in $\R^{3,1}$. Let $p \in M$; an asymptotic direction at $T_pM$ is defined as the inverse image of the second fundamental form of a point where the line that contains the origin is tangent to the ellipse of curvature; these line fields have been studied for surfaces in $\R^4$ in \cite{Little}, \cite{M-RF-R}, \cite{R-M-RF-R}, \cite{B-T} and \cite{Tari2008}.

We suppose that $K_N\neq 0,$ and we set, for any $\nu,\mu\in N_pM,$
\begin{equation}\label{def euclidean structure}
<\nu,\mu >_*:=<\nu,u_{\Phi}^{-1}(\mu)>.
\end{equation}
This defines a metric on $N_pM$ (see Remark \ref{sign Phi star}). We suppose in the following that $N_pM$ is equipped with this euclidean structure. By Lemma \ref{lemma ellipse Phi star}, the curvature ellipse $E$ is a unit circle of this euclidean plane (with center $\H$). Thus, the lines in $N_pM$ through the origin and tangent to $E$ are described by the equation
$$0= <\vec{H}+ II^{\circ}(u), II^{\circ}(u)>_*,$$
or equivalently by
\begin{eqnarray}\label{eqasym} 
<\vec{H},II^{\circ}(u)>_* =- 1.
\end{eqnarray}
Since $u_{\Phi}$ is diagonal in the basis $(\tilde u_1,\tilde u_2),$ with diagonal entries $(a^2,-b^2),$ we readily get that
$$<\nu,\mu >_*= \frac{\nu_1\mu_1}{a^2}+ \frac{\nu_2\mu_2}{b^2},$$ 
where $\nu = \nu_1 \tilde u_1  + \nu_2 \tilde u_2$ and $\mu = \mu_1 \tilde u_1  + \mu_2 \tilde u_2$ are normal vectors. Recalling that $\vec{H} = \alpha \tilde u_1 + \beta \tilde u_2$ and using (\ref{expression II circ}), equation (\ref{eqasym}) reads
\begin{eqnarray}\label{def quadratic form d}
\delta(u):=b(a +  \alpha )x_1^2+ 2a \beta x_1x_2+b(a-\alpha)x_2^2=0.
\end{eqnarray}
We also used here that $x_1^2+x_2^2=1.$ It is not difficult to verify that (\ref{def quadratic form d}) also holds in the case where $K_N=0.$ 

Let us denote by $u_{\delta}$ the symmetric operator of $T_pM$ such that, for all $u\in T_pM,$
$$\delta(u)=\langle u_{\delta}(u),u\rangle.$$
We now determine the eigenspaces of $u_\delta$ using the following elementary property: $u=x_1e_1+x_2e_2$ is an eigenvector of $u_\delta$ if and only if $\tilde \delta(u,u^{\perp})=0,$ where $\tilde \delta$ denotes the polar form of $\delta$ and $u^{\perp}$ stands for the vector $-x_2e_1+x_1e_2.$ This condition reads
$$(x_1,x_2)\left(\begin{array}{cc}b(a+\alpha)& a\beta\\a\beta&b(a-\alpha)\end{array}\right)\left(\begin{array}{r}-x_2\\x_1\end{array}\right)=0,$$
which is equation (\ref{equation mean curvature directions}) of the mean curvature directions. Thus, the mean curvature directions are the proper directions of the quadratic form $\delta.$ Since the proper directions of a quadratic form clearly bisect its null directions, we conclude:

\begin{proposition}\label{proposition bisect}
The mean curvature directions bisect the angle determined by the asymptotic directions. Moreover, these directions coincide with the proper directions of the quadratic form $\delta$ defined by the asymptotic directions.  
\end{proposition}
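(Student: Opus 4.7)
My plan is to leverage the computation carried out in the paragraph immediately preceding the proposition, since that paragraph essentially contains the identification of the mean curvature directions with the proper directions of $\delta$. The proof will therefore consist of packaging that computation cleanly and adding the (elementary) bisection step.

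The first step is to record the standard linear-algebra observation that, for a symmetric operator on the oriented euclidean plane $T_pM$, a unit vector $u$ is an eigenvector of $u_\delta$ if and only if $\tilde\delta(u,u^\perp)=0$, where $u^\perp$ denotes the $\pi/2$-rotation of $u$; this is because $u_\delta(u)$ lies in $\mathbb R\cdot u$ precisely when it is orthogonal to $u^\perp$. The second step is the coordinate computation: writing $u=x_1e_1+x_2e_2$ and using the matrix of $\delta$ read off from (\ref{def quadratic form d}), the condition $\tilde\delta(u,u^\perp)=0$ becomes, after a direct expansion, exactly equation (\ref{equation mean curvature directions}) up to a nonzero factor. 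This is the calculation already displayed in the excerpt and it yields the second assertion of the proposition: the mean directionally curved directions are precisely the proper directions of the quadratic form $\delta$.

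For the bisection assertion, I would invoke the elementary fact that the proper directions of any quadratic form on an oriented euclidean plane bisect its null directions whenever the latter are real. The cleanest way to see this is to diagonalize $\delta$ in an orthonormal eigenbasis of $u_\delta$, writing $\delta=\lambda_1 y_1^2+\lambda_2 y_2^2$; when the asymptotic directions are real (necessarily $\lambda_1\lambda_2\le 0$), they are the two lines $\lambda_1 y_1^2+\lambda_2 y_2^2=0$, and these lines are symmetric with respect to the coordinate axes $\{y_1=0\}$ and $\{y_2=0\}$, which are the proper directions of $\delta$. Combining this with the previous identification gives the first claim.

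I do not expect any real obstacle. The only step requiring attention is the bookkeeping in the matrix expansion of $\tilde\delta(u,u^\perp)$, which must be checked to match $m(u)$ up to a nonzero scalar; this is routine since the relevant $2\times 2$ matrix is explicit. A minor point worth mentioning in the write-up is that the bisection statement is vacuous when $\delta$ is definite (no real asymptotic directions), a case which is consistent with the statement as formulated.
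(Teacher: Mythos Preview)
Your proposal is correct and follows essentially the same approach as the paper: the paper's proof is precisely the computation in the paragraph preceding the proposition (the eigenvector criterion $\tilde\delta(u,u^\perp)=0$ reducing to equation (\ref{equation mean curvature directions})), followed by the remark that proper directions of a quadratic form bisect its null directions. Your write-up merely makes the bisection step slightly more explicit via diagonalization, which is a harmless elaboration of what the paper calls ``clearly''.
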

\begin{remark}
The quadratic form $\delta$ defined in (\ref{def quadratic form d}) is, up to sign, the lorentzian analog of the quadratic form $\delta$ introduced by J.Little in \cite{Little}. Its trace is 
$$\tr\delta:=\tr u_\delta=2ab=\pm K_N.$$ 
We compute its determinant 
$$\det\delta:=\det u_\delta=a^2b^2-b^2\alpha^2-a^2\beta^2.$$
Using (\ref{equation alpha}) and (\ref{equation beta}) in the form
$$\alpha^2=\frac{1}{a^2+b^2}\left(\Delta+a^2|\H|^2+\frac{1}{4}K_N^2\right)$$
and 
$$\beta^2=\frac{1}{a^2+b^2}\left(\Delta-b^2|\H|^2+\frac{1}{4}K_N^2\right).$$
we get by a direct computation that
\begin{equation}\label{formula for Phi star H}
b^2\alpha^2+a^2\beta^2=\Delta+\frac{1}{4}K_N^2.
\end{equation}
Thus
$$\det\delta=-\Delta.$$ 
In an orthonormal basis of eigenvectors of $\delta,$ equation (\ref{def quadratic form d}) simplifies to
$$\delta_1{x_1'}^2+\delta_2{x_2'}^2=0,$$
where $\delta_1$ and $\delta_2$ are such that $\delta_1+\delta_2=K_N$ and $\delta_1\delta_2=-\Delta.$ By an elementary computation we get the Lorentzian analog of Wong's formula \cite{W}
$$\tan^2\theta=\frac{4\Delta}{K_N^2},$$
where $\theta$ is the angle formed by the asymptotic directions. 
\end{remark}
\begin{remark}
Using formula (\ref{formula for Phi star H}) we readily get
\begin{equation}\label{Phi star H}
\langle\H,\H\rangle_*=\frac{4\Delta}{K_N^2}+1.
\end{equation} 
Here we also used that $a^2b^2=\frac{1}{4}K_N^2$. Thus $K_N^2\geq -4\Delta,$ with equality if and only if $\vec H=0$ or $p$ is an inflection point ($K_N=\Delta=0$). Moreover, since the curvature ellipse is
$$E=\{\H+\nu\in\R^{1,1}:\ \langle\nu,\nu\rangle_*= 1\}$$
we deduce the following result, which is well-known in the euclidean setting: denoting by $O$ the origin of the normal plane $N_pM,$

\textit{if $K_N\neq 0,$ then $\Delta=0$ if and only if $O$ belongs to $E,$ $\Delta>0$ if and only if $O$ is outside $E,$ and $\Delta<0$ if and only if $O$ is inside $E.$}
\end{remark}
\begin{remark}
It can be proved that the quadratic form $\delta$ of $M\subset\R^{3,1}$ is proportional to the form $\delta$ of $M$ considered as a surface in the euclidean space $\R^4.$ Thus, the notions of asymptotic lines in the euclidean and Minkowski spaces coincide. In fact, the asymptotic directions on a surface may be described in terms of its Gauss map only: let $\Sigma$ be a surface immersed in the affine space $\R^4,$ and let $G$ be the Gauss map
\begin{eqnarray*}
G:\Sigma&\rightarrow& G(2,4)\\
x&\mapsto& T_x\Sigma,
\end{eqnarray*}
where $G(2,4)$ is the Grassmannian of the 2-planes in $\R^4.$ Consider the Pl\"ucker embedding of $G(2,4)$ in $\mathbb{P}^5.$ Its image, denoted by $Q,$ is a smooth quadric of $\mathbb{P}^5,$ known as the Klein quadric. A line in $\mathbb{P}^5$ is said to be a null line if it belongs to $Q.$ The asymptotic directions of $\Sigma$ are in fact the directions $u\in T_x\Sigma$ such that $dG_x(u)$ is tangent to some null line of $Q.$ The notion of asymptotic directions is thus independent 
of the lorentzian or euclidean structure of the ambient space where $\Sigma$ is immersed. 
We refer to \cite{WW}, Sections 1.3 and 1.4 for the Pl\"ucker embedding and the linear geometry of the Klein quadric. This conclusion is in agreement to the fact that
the notion of asymptotic directions at a point on a surface immersed in Euclidean 4-space can be stated in terms of the contact of the surface with certain normal hyperplanes or certain lines, where this contact is of higher order, see \cite{M-RF-R} or \cite{B-T}, respectively.
\end{remark}

F. Tari has analyzed some differential equations on surfaces immersed in $\R^n,\ n\geq 4$. He shows that given a surface immersed in $\R^4$, there is a natural way to determine a unique binary differential equation which defines a set of orthogonal lines on the surface. Moreover, he proves that this set of lines is, in fact, the $\nu$-principal configuration on the surface, defined by some normal vector field $\nu$ (Theorem 2.5, \cite{Tari2008}).  

Because of this he calls the configuration of that lines, the principal configuration of the surface.

He proves in his article that the directions of this line field bisect the angle determined by the asymptotic lines (Corollary 4.4). Then, our Proposition $\ref{proposition bisect}$ implies that the  principal configuration of the surface is, in fact, the configuration defined by the mean directionally curved lines. We now explicit a normal vector field $\nu$ such that the mean directionally curved lines are the $\nu$-principal lines. Observe first that this description is possible only at points which are not semi-umbilics (i.e. such that $K_N\neq 0$). Recall the definition (\ref{def euclidean structure}) of the euclidean structure $\langle.,.\rangle_*$ on the normal planes of the surface. The principal directions of the real quadratic form $\langle II,\vec H\rangle_*$ are the critical points of $S^1\rightarrow \R,\ u\mapsto \langle II(u),\vec H\rangle_*,$ and are thus the directions $u\in S^1$ such that the tangent of the curvature ellipse at $II(u)$ is orthogonal to $\vec H$ (w.r.t. to $\langle.,.\rangle_*).$ Since the curvature ellipse is a circle for this scalar product, these directions $u$ are such that $II(u)$ belongs to the diameter of the circle which is directed by $\vec H.$ These directions are precisely the directions of the mean directionally curved lines. Since $\langle II,\vec H\rangle_*=\langle II,u_{\Phi}^{-1}(\vec H)\rangle,$ setting 
$$\nu=u_{\Phi}^{-1}(\vec H),$$
the mean directionally curved lines are thus the $\nu$-principal lines (on the set where $K_N\neq 0$).

\end{document}